\newtheorem{theorem}{Theorem}[section]
\newtheorem{lemma}[theorem]{Lemma}
\newtheorem{proposition}[theorem]{Proposition}
\newtheorem{corollary}[theorem]{Corollary}
\theoremstyle{definition}
\newtheorem{definition}[theorem]{Definition}
\newtheorem{example}[theorem]{Example}
\title{Persistent commutative algebra on graphs and hypergraphs}
\author{
	Faisal Suwayyid\footnote{suwayyid@msu.edu~}$^{~1}$, 
		and Guo-Wei Wei\footnote{weig@msu.edu~}$^{~2,3,4}$ \\		
		$^1$Department of Mathematics,\\
		King Fahd University of Petroleum and Minerals, Dhahran 31261, KSA.\\
			$^2$Department of Mathematics,\\
		Michigan State University, MI 48824, USA.\\
		$^3$Department of Electrical and Computer Engineering,\\
		Michigan State University, MI 48824, USA.\\
		$^4$Department of Biochemistry and Molecular Biology,\\
		Michigan State University, MI 48824, USA.
	}
	\date{\today}
\begin{document}
 \maketitle 

\begin{abstract}

We introduce a persistent commutative algebra for studying the algebraic and combinatorial evolution of edge ideals of graphs and hypergraphs under filtration. Building on the Persistent Stanley--Reisner Theory (PSRT), we develop the notion of {persistent edge ideals} and analyze their graded Betti numbers across the filtration of graphs or hypergraphs. To enable this analysis, we establish a persistent extension of Hochster’s formula, providing a functorial correspondence between algebraic and topological persistence. We further examine the behavior of Betti splittings in the persistent setting, proving a general inequality that extends the classical splitting result to the filtration of monomial ideals. Motivated by graph-theoretic interpretations, we introduce {persistent minimal vertex covers}, which encode the temporal structure of combinatorial dependencies within evolving graphs or hypergraphs. Applications to alignment-free genomic classification and molecular isomer discrimination demonstrate the interpretability and representatbility of persistent edge ideals as algebraic invariants, bridging combinatorial commutative algebra and data science.

\end{abstract}

{\setcounter{tocdepth}{4} \tableofcontents}
\setcounter{page}{1}
\newpage	

\clearpage
\section{Introduction}
\label{sec:introduction}

Topological Data Analysis (TDA) provides a rigorous framework for extracting topological structures from high-dimensional and complex data. Among its most powerful tools is {persistent (co)homology}, which tracks the evolution of topological features---connected components, cycles, and higher-dimensional voids---across a family of simplicial complexes over filtration \cite{edelsbrunner2008persistent, BubenikDlotko2017PersistenceLandscapes, zomorodian2005computing}. Filtration-based constructions such as the Vietoris--Rips and Čech complexes encode data across multiple scales, producing persistence modules whose barcodes and diagrams quantify the lifespan of these features \cite{carlsson2009topology, AdamsCoskunuzer2022_GeometricApproachesPersistentHomology}. Persistent homology and its variants have been successfully applied to diverse problems, from protein folding and drug discovery to signal processing and materials science \cite{CangWei2017_TopologyNet, chen2023persistent, GrbicWuXiaWei2022_AspectsTopologicalApproaches,wee2025review}. Despite their success, homological approaches capture only {topological} variation and cannot detect some structural or non-topological differences---for instance, distinguishing a five-member ring from a six-member ring in data \cite{wei2025persistent,su2025topological}. 

A parallel algebraic development has emerged from {Combinatorial Commutative Algebra (CCA)}, which connects the combinatorics to the algebra of monomial ideals. The {Stanley--Reisner correspondence} encodes a simplicial complex as a square-free monomial ideal, while Hochster’s formula expresses its graded Betti numbers in terms of reduced cohomology of induced subcomplexes \cite{MillerSturmfels2005, bruns1998cohen, Hu2025CommAlgTDA}. This correspondence bridges algebraic geometry, topology, and combinatorics, providing refined algebraic invariants that complement topological invariants.

Building on this foundation, {Persistent Stanley--Reisner Theory (PSRT)}~\cite{SuwayyidWei2026_PSRT} unified persistent homology and combinatorial commutative algebra by studying the evolution of Stanley--Reisner ideals under filtration. PSRT introduced persistent graded Betti numbers, persistent $f$- and $h$-vectors, and facet persistence barcodes, demonstrating stability theorems analogous to those in classical persistent homology. These invariants captured multiscale algebraic changes in simplicial complexes, providing an algebraically grounded alternative to topology-based persistence.

The PSRT framework stimulates a broader program of {persistent commutative algebra}, which underlies several recent advances in commutative algebra learning. Notably, the {Commutative Algebra $k$-mer Learning (CAKL)} framework~\cite{SuwayyidHozumiFengZiaWeeWei2025_CAKL} applies persistent algebraic invariants to genomic classification and viral phylogeny, while {Commutative Algebra Machine Learning (CAML)}~\cite{FengSuwayyidZiaWeeHozumiChenWei2025_CAML} extends this paradigm to protein--ligand binding affinity prediction. Commutative algebra offers successful predictions of protein-nucleic
acid binding affinities \cite{zia2025cap}.
 More recently, the {Commutative Algebra Neural Network (CANet)}~\cite{WeeSuwayyidZiaFengHozumiWei2025_CANet} revealed genetic origins of diseases through persistent algebraic descriptors derived from DNA data. The use of graded Betti curves was explored in computational biology \cite{zia2025gbnl}. A comparative study of persistent commutative algebra, persistent homology, and topological spectral theory was reported \cite{ren2025interpretability}. 
Together, these works demonstrate that persistence in commutative algebra provides a unified, interpretable, and scalable mathematical foundation for multiscale learning across molecular and biological domains.

In the present work, we focus on the algebraic and combinatorial evolution of {edge ideals} under filtration, which introduces the notion of {persistent edge ideals of graphs and hypergraphs}. To establish their theoretical foundations, we extend Hochster’s formula to the persistent setting, linking algebraic persistence to topological persistence through induced maps in cohomology of subcomplexes. We further investigate the behavior of Betti splittings across filtration, proving a general inequality that extends the classical decomposition principle to the persistent setting. In addition, we define {persistent minimal vertex covers}, which encode the temporal dynamics of combinatorial dependencies within evolving graphs. These developments yield a persistent Betti theory for edge ideals, connecting algebraic, topological, and graph-theoretic aspects of persistence and demonstrating their interpretability and representability in applications to genomic classification and molecular isomer discrimination.

\section{Preliminaries}
\subsection{Graded and Multigraded Betti Numbers}

Let $S=k[x_1,\dots,x_n]$ be a polynomial ring over a field $k$. Throughout, we use the standard $\mathbb{Z}$–grading on $S$, $\deg(x_i)=1$, and consider finitely generated $\mathbb{Z}$–graded $S$–modules.

\smallskip

Given a finitely generated $\mathbb{Z}$–graded $S$–module $M$, a minimal graded free resolution packages the homological information of $M$ in a way that respects degrees:
\[
0 \longleftarrow M \longleftarrow 
\bigoplus_{j\in\mathbb{Z}} S(-j)^{\beta_{0,j}(M)}
\longleftarrow 
\bigoplus_{j\in\mathbb{Z}} S(-j)^{\beta_{1,j}(M)}
\longleftarrow \cdots
\longleftarrow 
\bigoplus_{j\in\mathbb{Z}} S(-j)^{\beta_{i,j}(M)}
\longleftarrow \cdots \longleftarrow 0 .
\]
The integers $\beta_{i,j}(M)\in\mathbb{N}$ are the {graded Betti numbers} of $M$; equivalently, they record the graded pieces of the derived functor $\operatorname{Tor}$:
\[
\beta_{i,j}(M)=\dim_k\!\bigl(\operatorname{Tor}^{S}_i(M,k)_j\bigr).
\]
Aggregating over all internal degrees yields the {total} $i$–th Betti number,
\[
\beta_i(M):=\sum_{j\in\mathbb{Z}}\beta_{i,j}(M),
\]
which measures the overall size of the $i$–th step in a minimal resolution.


It is often convenient to retain the full multidegree data. Endow $S$ with the standard $\mathbb{Z}^n$–grading by $\deg x_r=\mathbf e_r$, and let $M$ be a finitely generated $\mathbb{Z}^n$–graded $S$–module. The {multigraded Betti numbers} are defined by
\[
\beta_{i,\alpha}(M):=\dim_k\!\bigl(\operatorname{Tor}^{S}_i(M,k)_\alpha\bigr)\qquad(\alpha\in\mathbb{Z}^n).
\]
Passing from the multigraded to the singly graded setting amounts to coarsening by the weight map $|\alpha|:=\alpha_1+\cdots+\alpha_n$, and the two notions are compatible via
\[
\beta_{i,j}(M)=\sum_{|\alpha|=j}\beta_{i,\alpha}(M).
\]
Thus, the graded invariants are obtained by summing multigraded contributions over all multidegrees of the same total weight.


The interaction of Betti numbers with exact sequences reflects the functoriality of $\operatorname{Tor}$. For a short exact sequence of graded $S$–modules
\[
0\to N\to M\to Q\to 0,
\]
the long exact sequence in $\operatorname{Tor}$, taken degreewise, reads
\[
\cdots\longrightarrow
\operatorname{Tor}^S_i(N,k)_j \longrightarrow
\operatorname{Tor}^S_i(M,k)_j \longrightarrow
\operatorname{Tor}^S_i(Q,k)_j \longrightarrow
\operatorname{Tor}^S_{i-1}(N,k)_j \longrightarrow \cdots
\]
for all $i\ge 0$ and $j\in\mathbb{Z}$. Exactness directly yields the subadditivity
\[
\beta_{i,j}(M)\;\le\;\beta_{i,j}(N)\;+\;\beta_{i,j}(Q)\qquad\text{for all }i,j,
\]
and, after summing over $j$, the corresponding statement for total Betti numbers,
\[
\beta_i(M)\;\le\;\beta_i(N)\;+\;\beta_i(Q)\qquad(i\ge 0).
\]
In the special case where the sequence splits (equivalently, $M\cong N\oplus Q$ as graded modules), the degreewise long exact sequence decomposes, and additivity holds on the nose:
\[
\beta_{i,j}(M)\;=\;\beta_{i,j}(N)\;+\;\beta_{i,j}(Q)\qquad\text{for all }i,j.
\]

Suppose in addition that $M$ is $S$–flat. Then $\operatorname{Tor}_i^S(M,k)=0$ for all $i\ge 1$, and
$-\otimes_S k$ is exact on the given short exact sequence. The long exact sequence in $\operatorname{Tor}$ therefore
breaks, degreewise in $j\in\mathbb{Z}$, into isomorphisms
\[
\operatorname{Tor}_i^S(Q,k)_j \;\cong\; \operatorname{Tor}_{i-1}^S(N,k)_j
\qquad\text{for all } i\ge 1,
\]
hence
\begin{equation*}
	\beta_{i,j}(Q)\;=\;\beta_{i-1,j}(N)\qquad (i\ge 1,\; j\in\mathbb{Z}).
\end{equation*}
Summing over $j$ gives the corresponding identities for total Betti numbers:
\[
\beta_i(Q)=\beta_{i-1}(N)\ (i\ge 1).
\]
All statements remain valid in the multigraded setting. In this case, one needs to replace $j$ by a multidegree $u$.

Graded and multigraded Betti numbers capture the size and distribution of syzygies across degrees; multigraded data refine the singly graded picture, and exact sequences control how these invariants compare across extensions and direct sums. For comprehensive background and foundational results in commutative and homological algebra, we refer the reader to the classical texts~\cite{MillerSturmfels2005, bruns1998cohen, Eisenbud1995_CommutativeAlgebra, Rotman2009_IntroHomologicalAlgebra}.

\subsection{Edge Ideals and the Independence Complex}

We fix a field $k$ and write $S=k[x_1,\dots,x_n]$ for the standard $\mathbb{N}$–graded polynomial ring with $\deg x_i=1$. Throughout, all graphs are finite and simple (no loops, no multiple edges), and all monomial ideals live inside $S$.

\begin{definition}[Edge ideal]
	Let $G=(V,E)$ be a finite simple graph on the vertex set $V=\{1,\dots,n\}$. The {edge ideal} of $G$ is the squarefree monomial ideal
	\[
	I(G)\;:=\; \bigl( \,x_ix_j \;\bigm|\; \{i,j\}\in E \,\bigr) \;\subseteq\; S.
	\]
\end{definition}

The preceding definition simply encodes the edge set of $G$ as degree--$2$ squarefree generators in $S$. By construction, $I(G)$ is generated by squarefree monomials of degree $2$ (one for each edge).

It is convenient to record a basic property of squarefree monomial ideals, which immediately applies to edge ideals.

\begin{proposition}[Squarefree monomial ideals are radical]
	If $I\subseteq S$ is generated by squarefree monomials, then $I$ is radical; equivalently $\sqrt{I}=I$.
\end{proposition}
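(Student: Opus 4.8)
The plan is to show that $S/I$ is reduced, i.e., has no nonzero nilpotents, which is equivalent to $\sqrt{I} = I$. The cleanest approach exploits the $\mathbb{Z}^n$–grading: since $I$ is a monomial ideal, $S/I$ is $\mathbb{Z}^n$–graded, and a homogeneous nilpotent must be a scalar multiple of a single monomial. So it suffices to prove that if $m$ is a monomial with $m^N \in I$ for some $N \ge 1$, then already $m \in I$.

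First I would record the elementary divisibility fact for monomial ideals: a monomial $u$ lies in a monomial ideal $J = (u_1, \dots, u_s)$ if and only if some generator $u_\ell$ divides $u$. Applying this to $m^N \in I = (v_1, \dots, v_r)$ with each $v_\ell$ squarefree, we get that some $v_\ell \mid m^N$. Writing $v_\ell = x_{i_1} \cdots x_{i_t}$ with distinct indices, the condition $v_\ell \mid m^N$ forces $x_{i_p}$ to divide $m^N$ for each $p$, hence $x_{i_p} \mid m$ for each $p$ (a variable divides a power of a monomial iff it divides the monomial). Since the $i_p$ are distinct and $v_\ell$ is squarefree, this gives $v_\ell \mid m$, so $m \in I$.

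To complete the reduction to the monomial case, I would argue that for any monomial ideal $J$, the radical $\sqrt{J}$ is again a monomial ideal — indeed, if $f = \sum c_\alpha x^\alpha \in \sqrt{J}$ then, grading $f$ by multidegree and using that $J$ is $\mathbb{Z}^n$–homogeneous, each monomial component of $f^N$ that lies in $J$ can be tracked, and a standard argument (e.g. picking a monomial order and looking at leading terms, or directly using the homogeneous decomposition of nilpotents in $S/J$) shows each $x^\alpha$ individually lies in $\sqrt{J}$. Combined with the previous paragraph, $\sqrt{I}$ and $I$ contain exactly the same monomials, and since both are monomial ideals they are equal. The inclusion $I \subseteq \sqrt{I}$ is automatic, so we are done.

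The only genuinely delicate point is the claim that $\sqrt{J}$ is a monomial ideal for monomial $J$; everything else is bookkeeping about divisibility. I expect to handle this either by citing the standard fact from the combinatorial commutative algebra references already listed (\cite{MillerSturmfels2005, bruns1998cohen}), or by giving the short self-contained argument: if $f \in \sqrt{J}$, take the largest monomial $x^\alpha$ appearing in $f$ with respect to a fixed term order; then the leading term of $f^N$ is $c^N x^{N\alpha}$, which must lie in the monomial ideal $J$, forcing $x^{N\alpha} \in J$ and hence $x^\alpha \in \sqrt{J}$; subtracting $c\,x^\alpha$ and inducting on the number of terms shows every monomial of $f$ lies in $\sqrt{J}$.
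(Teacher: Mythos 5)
Your proof is correct and complete: the paper states this proposition without proof (as a standard recorded fact), so there is no argument to compare against, but yours is the standard one. The two key steps — that $\sqrt{J}$ is again a monomial ideal for any monomial ideal $J$ (via the leading-term induction, or equivalently the $\mathbb{Z}^n$-homogeneity of the nilradical), and that a squarefree generator dividing $m^N$ must already divide $m$ — are exactly what is needed, and you correctly flag and fill the only delicate point rather than leaving the reduction to homogeneous nilpotents implicit.
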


In particular, every edge ideal admits a canonical description as an intersection of monomial primes.

\begin{corollary}[Prime intersection decomposition, \cite{HerzogHibi2011}]
	For any graph $G$, the edge ideal $I(G)$ is radical. Hence it admits an (irredundant) intersection decomposition as an intersection of prime ideals:
	\[
	I(G)\;=\;\bigcap_{t=1}^r \mathfrak p_t,
	\qquad \text{with each } \mathfrak p_t \text{ a monomial prime of } S.
	\]
\end{corollary}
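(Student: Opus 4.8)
The first assertion is immediate: $I(G)$ is generated by the squarefree monomials $x_ix_j$ with $\{i,j\}\in E$, so the preceding proposition applies verbatim and gives $\sqrt{I(G)}=I(G)$. Hence the content of the statement is the production of the prime decomposition, and the plan is to combine three standard facts about ideals in the Noetherian ring $S$, the key one being special to monomial ideals.

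First I would recall that in a Noetherian ring every ideal $J$ has only finitely many minimal primes $\mathfrak p_1,\dots,\mathfrak p_r$ and that $\sqrt J=\bigcap_{t=1}^{r}\mathfrak p_t$; applied to $J=I(G)$, radicality yields $I(G)=\bigcap_{t=1}^{r}\mathfrak p_t$ directly. Second, I would show each $\mathfrak p_t$ is a \emph{monomial} prime, i.e.\ $\mathfrak p_t=(x_i: i\in\sigma_t)$ for some $\sigma_t\subseteq\{1,\dots,n\}$. For this, given any prime $\mathfrak p\supseteq I(G)$, put $\sigma=\{i: x_i\in\mathfrak p\}$ and $\mathfrak p_\sigma=(x_i: i\in\sigma)$; for each generator $x_ix_j$ of $I(G)$, primeness forces $x_i\in\mathfrak p$ or $x_j\in\mathfrak p$, so $x_ix_j\in\mathfrak p_\sigma$, and therefore $I(G)\subseteq\mathfrak p_\sigma\subseteq\mathfrak p$. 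When $\mathfrak p$ is minimal over $I(G)$ this sandwich forces $\mathfrak p=\mathfrak p_\sigma$, a monomial prime. Third, the resulting decomposition $I(G)=\bigcap_t\mathfrak p_t$ is automatically irredundant, since distinct minimal primes are incomparable under inclusion; alternatively one discards any component containing the intersection of the others.

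It is worth recording the combinatorial refinement that makes the decomposition explicit, since it reappears later in the paper: the index sets $\sigma_t$ above are exactly the minimal vertex covers of $G$. Indeed $\mathfrak p_\sigma\supseteq I(G)$ iff every edge $\{i,j\}$ meets $\sigma$, i.e.\ iff $\sigma$ is a vertex cover; and because $\mathfrak p_C$ is a monomial ideal one reduces to monomials, where a monomial lies in $\bigcap_C\mathfrak p_C$ (over minimal vertex covers $C$) iff its support meets every vertex cover iff its support is not independent iff its support contains an edge iff it lies in $I(G)$. This gives the identity $I(G)=\bigcap_{C}\mathfrak p_C$ a second, self-contained proof. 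There is no genuine obstacle here: the result is classical \cite{HerzogHibi2011, MillerSturmfels2005}, and the only step needing an argument rather than a citation is the passage from a minimal prime $\mathfrak p$ over a monomial ideal to $\mathfrak p_\sigma$, where the mild point is that this passage does not enlarge the ideal, so minimality is preserved.
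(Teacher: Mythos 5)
Your proof is correct. The paper states this corollary without proof (deferring to the cited reference), and your route --- radicality from the preceding proposition, the Noetherian fact $\sqrt{J}=\bigcap_{t}\mathfrak p_t$ over the finitely many minimal primes, and the sandwich $I(G)\subseteq\mathfrak p_\sigma\subseteq\mathfrak p$ forcing minimal primes to be monomial --- is the standard one; in fact that sandwich step is exactly the argument the paper gives in the proof of the theorem immediately following this corollary.
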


The decomposition in question admits a transparent graph–theoretic formulation once we pass to the language of vertex covers.

\begin{definition}[Vertex cover and minimal vertex cover]
	A subset $C\subseteq V$ is a {vertex cover} of $G$ if every edge of $G$ contains at least one vertex from $C$. A vertex cover $C$ is {minimal} if no proper subset of $C$ is a vertex cover.
\end{definition}

Intuitively, a vertex cover picks one endpoint from each edge. Passing to ideals,
$\mathfrak p_C:=\langle x_i : i\in C\rangle$ kills exactly those variables and thus contains every quadratic
generator $x_ix_j$ with $\{i,j\}\in E$. Minimality of $C$ corresponds to
primality minimal over $I(G)$. We record this as follows.

\begin{theorem}
	Let $G=(V,E)$ be a finite simple graph with $V=\{1,\dots,n\}$, and let
	$S=k[x_1,\dots,x_n]$. Denote the edge ideal by
	\[
	I(G)\;=\;\langle\, x_ix_j \ : \ \{i,j\}\in E \,\rangle \ \subseteq\ S.
	\]
	For $C\subseteq V$ write $\mathfrak p_C:=\langle x_i : i\in C\rangle$.
	Then a prime ideal $\mathfrak p\subseteq S$ is minimal over $I(G)$ if and only if
	$\mathfrak p=\mathfrak p_C$ for some minimal vertex cover $C$ of $G$.
\end{theorem}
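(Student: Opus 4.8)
The plan is to prove both implications by directly translating the prime-containment condition into the combinatorial vertex-cover condition, using only that $I(G)$ is a squarefree (hence radical) monomial ideal and that monomial primes of $S$ are exactly the ideals $\mathfrak p_C$ with $C\subseteq V$.

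\textbf{Forward direction.} Suppose $\mathfrak p$ is minimal over $I(G)$. Since $I(G)$ is a monomial ideal, every associated and every minimal prime of $I(G)$ is a monomial prime, so $\mathfrak p=\mathfrak p_C$ for some $C\subseteq V$ (I would cite the standard fact, or give the one-line argument: a minimal prime of a monomial ideal is generated by a subset of the variables, since $S/I(G)$ is $\mathbb{Z}^n$-graded and its minimal primes are homogeneous monomial primes). First I would check that $I(G)\subseteq\mathfrak p_C$ forces $C$ to be a vertex cover: for each edge $\{i,j\}\in E$ the generator $x_ix_j\in\mathfrak p_C$, and since $\mathfrak p_C$ is prime and monomial, $x_i\in\mathfrak p_C$ or $x_j\in\mathfrak p_C$, i.e. $i\in C$ or $j\in C$. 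Conversely, if $C$ is any vertex cover then each $x_ix_j$ with $\{i,j\}\in E$ has an endpoint in $C$, so $x_ix_j\in\mathfrak p_C$, giving $I(G)\subseteq\mathfrak p_C$. Thus the primes $\mathfrak p$ containing $I(G)$ among the $\mathfrak p_C$ are exactly those with $C$ a vertex cover; minimality of $\mathfrak p$ (with respect to inclusion) then corresponds to minimality of $C$, because $\mathfrak p_{C'}\subseteq\mathfrak p_C$ if and only if $C'\subseteq C$. Hence $C$ is a minimal vertex cover.

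\textbf{Converse direction.} Suppose $C$ is a minimal vertex cover of $G$. By the computation above, $I(G)\subseteq\mathfrak p_C$, so $\mathfrak p_C$ is a prime containing $I(G)$; I must show it is minimal among all such primes. If $\mathfrak q\subseteq\mathfrak p_C$ is a prime with $I(G)\subseteq\mathfrak q$, then $\mathfrak q$ contains a minimal prime of $I(G)$, which by the forward direction is $\mathfrak p_{C'}$ for some minimal vertex cover $C'$; from $\mathfrak p_{C'}\subseteq\mathfrak q\subseteq\mathfrak p_C$ we get $C'\subseteq C$, and minimality of $C$ as a vertex cover forces $C'=C$, hence $\mathfrak q=\mathfrak p_C$. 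Therefore $\mathfrak p_C$ is minimal over $I(G)$. Combining the two implications with the bijection between monomial primes and vertex subsets, together with the order-reversing correspondence noted above, the intersection decomposition $I(G)=\bigcap_{C}\mathfrak p_C$ over minimal vertex covers $C$ follows as well (matching the Corollary), though only the stated equivalence is required here.

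\textbf{Main obstacle.} The only nontrivial input is the assertion that a minimal prime of a monomial ideal is itself a monomial prime $\mathfrak p_C$; everything else is a routine translation. I would handle this either by citation to a standard reference (e.g. \cite{MillerSturmfels2005} or \cite{HerzogHibi2011}) or by the short $\mathbb{Z}^n$-graded argument: the associated primes of a $\mathbb{Z}^n$-graded module are $\mathbb{Z}^n$-graded, and a graded prime of $S$ is generated by variables. A minor point to state carefully is the order-reversing equivalence $\mathfrak p_{C'}\subseteq\mathfrak p_C\iff C'\subseteq C$, which is immediate since the $x_i$ form a minimal generating set, so no genuine difficulty arises there.
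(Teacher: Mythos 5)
Your proof is correct, and the combinatorial core (primality of $\mathfrak p_C$ forces an endpoint of each edge into $C$, so ``$I(G)\subseteq\mathfrak p_C$'' is equivalent to ``$C$ is a vertex cover,'' and $\mathfrak p_{C'}\subseteq\mathfrak p_C\iff C'\subseteq C$) coincides with the paper's. The genuine difference is how you establish that a minimal prime of $I(G)$ is a monomial prime at all: you import the general structure theorem (minimal primes of a $\mathbb{Z}^n$-graded ideal are graded, hence generated by variables) as an external input, and in the converse direction you additionally invoke the existence of a minimal prime below any prime containing $I(G)$. The paper avoids both by a self-contained sandwich: given \emph{any} prime $\mathfrak p$ minimal over $I(G)$, set $C:=\{i: x_i\in\mathfrak p\}$; primality makes $C$ a vertex cover, so $I(G)\subseteq\mathfrak p_C\subseteq\mathfrak p$, and minimality of $\mathfrak p$ collapses this to $\mathfrak p=\mathfrak p_C$ --- simultaneously proving monomiality and identifying the cover. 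The same device (taking $D:=\{i:x_i\in\mathfrak q\}$ for an intermediate prime $\mathfrak q$) handles the converse directly. Your route is shorter if the graded classification is taken as known and it cleanly yields the order-theoretic bijection (and the intersection decomposition) as a byproduct; the paper's route buys complete self-containment at the cost of repeating the sandwich trick twice. Either way, do make the graded input explicit (citation or the one-line argument you sketch), since without it the forward direction has no starting point.
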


\begin{proof}
	{($\Rightarrow$)} Let $\mathfrak p$ be a prime ideal minimal over $I(G)$.
	Set
	\[
	C\ :=\ \{\, i\in V \ : \ x_i\in \mathfrak p \,\}.
	\]
	We claim that $C$ is a vertex cover. Indeed, if $\{i,j\}\in E$, then
	$x_i x_j\in I(G)\subseteq \mathfrak p$, and since $\mathfrak p$ is prime,
	$x_i\in \mathfrak p$ or $x_j\in \mathfrak p$, i.e.\ $\{i,j\}\cap C\neq\varnothing$.
	Thus $C$ meets every edge, so $C$ is a vertex cover. Consequently
	$I(G)\subseteq \mathfrak p_C$ (because each generator $x_ix_j$ has an endpoint in $C$),
	and by construction $\mathfrak p_C\subseteq \mathfrak p$ (since all $x_i$ with $i\in C$
	lie in $\mathfrak p$). Hence $I(G)\subseteq \mathfrak p_C\subseteq \mathfrak p$.
	Minimality of $\mathfrak p$ over $I(G)$ forces $\mathfrak p=\mathfrak p_C$.
	
	It remains to see that $C$ is {minimal} as a vertex cover. Suppose not; then there
	exists $i_0\in C$ such that $C':=C\setminus\{i_0\}$ is still a vertex cover. But then
	$I(G)\subseteq \mathfrak p_{C'}\subsetneq \mathfrak p_C=\mathfrak p$, contradicting the minimality of
	$\mathfrak p$ over $I(G)$. Thus $C$ is a minimal vertex cover.
	
	\smallskip
	{($\Leftarrow$)} Let $C\subseteq V$ be a minimal vertex cover. The ideal
	$\mathfrak p_C=\langle x_i : i\in C\rangle$ is a monomial prime of $S$.
	Since $C$ meets every edge, each generator $x_ix_j$ of $I(G)$ has $i\in C$ or $j\in C$,
	so $x_ix_j\in \mathfrak p_C$; hence $I(G)\subseteq \mathfrak p_C$.
	
	To prove minimality, let $\mathfrak q$ be a prime ideal with
	$I(G)\subseteq \mathfrak q \subseteq \mathfrak p_C$. Set
	$D:=\{\, i\in V : x_i\in \mathfrak q \,\}$. As above, for each edge $\{i,j\}\in E$ we have
	$x_i x_j\in I(G)\subseteq \mathfrak q$, so by primality $x_i\in \mathfrak q$ or $x_j\in \mathfrak q$,
	whence $D$ is a vertex cover. Moreover, $\mathfrak q\subseteq \mathfrak p_C$ implies $D\subseteq C$.
	By minimality of $C$ as a vertex cover we must have $D=C$. Therefore
	$\mathfrak q=\langle x_i : i\in D\rangle=\mathfrak p_C$, showing that no proper containment
	between $I(G)$ and $\mathfrak p_C$ by a prime is possible. Hence $\mathfrak p_C$ is minimal over $I(G)$.
	
	\smallskip
	Combining the two directions, the minimal primes of $I(G)$ are precisely the
	$\mathfrak p_C$ with $C$ a minimal vertex cover of $G$.
\end{proof}


It is often fruitful and natural to pass from edge ideals to simplicial complexes because of the
Stanley–Reisner correspondence: every squarefree monomial ideal
$I\subseteq S=k[x_1,\dots,x_n]$ is the Stanley–Reisner ideal $I_\Delta$ of a unique
simplicial complex $\Delta$ on $[n]=\{1,2,\cdots,n\}$, with
\[
I_\Delta \;=\; \big\langle x^\sigma \ :\ \sigma\notin \Delta \big\rangle.
\]
For a graph $G$ on $[n]$ the canonical choice of $\Delta$ is the
{independence complex} $\operatorname{Ind}(G)$, whose faces are the independent
sets of $G$.  This identification transports algebraic invariants of $S/I(G)$ to topological/combinatorial
invariants of $\operatorname{Ind}(G)$, with the precise correspondence given by Hochster’s formula.

\begin{definition}[Independence complex]
	The {independence complex} (or stable set complex) of $G$ is the simplicial complex
	\[
	\mathrm{Ind}(G)\;:=\;\bigl\{\, \sigma\subseteq V \ \bigm|\ \text{$\sigma$ contains no edge of $G$}\,\bigr\}.
	\]
	Equivalently, the faces of $\mathrm{Ind}(G)$ are independent sets in $G$.
\end{definition}

This dictionary identifies edge ideals as Stanley--Reisner ideals.

\begin{proposition}[Edge ideals as Stanley--Reisner ideals, \cite{VanTuyl2013_BeginnerGuideEdgeCoverIdeals}]
	Let $I_\Delta$ denote the Stanley--Reisner ideal of a simplicial complex $\Delta$ on $V$, i.e.
	\[
	I_\Delta \;=\; \bigl( x_F \mid F\subseteq V,\ F\notin \Delta \bigr)
	\quad \text{with } x_F := \prod_{i\in F} x_i.
	\]
	Then for any graph $G$,
	\[
	I(G)\;=\; I_{\mathrm{Ind}(G)}.
	\]
	In particular, the minimal nonfaces of $\mathrm{Ind}(G)$ are exactly the edges of $G$, and the minimal monomial generators of $I_{\mathrm{Ind}(G)}$ are the edge monomials of $I(G)$.
\end{proposition}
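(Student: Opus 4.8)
The plan is to compare the two monomial ideals generator by generator, routing everything through the combinatorial description of the (minimal) nonfaces of $\mathrm{Ind}(G)$. First I would verify that $\mathrm{Ind}(G)$ really is a simplicial complex on $V$: it is closed under passing to subsets, since a subset of an independent set still contains no edge, and it contains $\varnothing$ together with every singleton $\{i\}$ precisely because $G$ is simple (no loops), so no zero- or one-element set can contain an edge. This last point is exactly where the hypothesis that $G$ is simple enters, and it is what will make the \emph{edges}, rather than some smaller sets, the minimal obstructions.

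Next I would describe the nonfaces. By definition a subset $F\subseteq V$ fails to lie in $\mathrm{Ind}(G)$ exactly when $F$ is not independent, i.e.\ when $F$ contains both endpoints of some edge $\{i,j\}\in E$, equivalently when $F\supseteq\{i,j\}$ for some edge. Among these nonfaces I would isolate the minimal ones: each edge $\{i,j\}$ is itself a nonface whose proper subsets ($\varnothing$ and the two singletons) are all faces, so every edge is a minimal nonface; conversely, any minimal nonface $F$ must contain some edge $\{i,j\}$, which is already a nonface, so minimality forces $F=\{i,j\}$. Hence the minimal nonfaces of $\mathrm{Ind}(G)$ are exactly the edges of $G$.

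Finally I would assemble the ideal. The Stanley--Reisner ideal $I_{\mathrm{Ind}(G)}=\bigl(x_F : F\notin\mathrm{Ind}(G)\bigr)$ is, as a monomial ideal, already generated by the $x_F$ ranging over \emph{minimal} nonfaces $F$, because whenever $F\subseteq F'$ are both nonfaces one has $x_F\mid x_{F'}$, so the non-minimal generators are redundant. By the previous step this reduced generating set is $\{x_ix_j : \{i,j\}\in E\}$, which is precisely the defining generating set of $I(G)$; therefore $I(G)=I_{\mathrm{Ind}(G)}$, and since the edge monomials are pairwise non-dividing, they are exactly the minimal monomial generators. I do not anticipate a genuine obstacle: the argument is a direct unwinding of definitions. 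The only two points worth stating carefully are the divisibility reduction from all nonfaces to minimal nonfaces in the description of $I_{\mathrm{Ind}(G)}$, and the use of simplicity of $G$, which ensures $\varnothing$ and all singletons are faces and hence that the minimal nonfaces are genuinely the edges (and not, say, isolated vertices arising from loops).
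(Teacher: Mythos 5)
Your proof is correct and complete: the verification that $\mathrm{Ind}(G)$ is a simplicial complex, the identification of its minimal nonfaces with the edges of $G$ (using simplicity of $G$ to guarantee that singletons are faces), and the divisibility reduction from all nonfaces to minimal nonfaces together give exactly the standard argument. The paper itself offers no proof of this proposition—it is stated with a citation to the literature—so there is nothing to compare against; your definitional unwinding is the expected route and fills the gap correctly.
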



We finally set notation for later homological computations. Let $\Delta$ be a simplicial complex on $V=\{1,\dots,n\}$ and write $S/I_\Delta$ for its Stanley--Reisner ring. For $W\subseteq V$, denote by
\[
\Delta_W \;:=\; \{\,\sigma\in\Delta \mid \sigma\subseteq W\,\}
\]
the induced subcomplex on $W$. It is well known from Hochster’s formula that the graded and multigraded Betti numbers of $S/I_\Delta$ are governed by the (co)homology of these induced subcomplexes \cite{hochster1969prime}. Let $\{\mathbf e_1,\dots,\mathbf e_n\}$ be the standard basis of $\mathbb{Z}^n$.
For $W\subseteq V$, define its indicator multidegree $\mathbf{1}_W\in\mathbb{Z}^n$ by
\[
\mathbf{1}_W \;:=\; \sum_{i\in W} \mathbf e_i .
\]
We will use this perspective in subsequent sections without further comment.

\begin{theorem}[Hochster's formula (multigraded form), \cite{bruns1998cohen}]
	Let $S=k[x_1,\dots,x_n]$ with its $\mathbb{Z}^n$–grading $\deg(x_i)=\mathbf{e}_i$, and let
	$\Delta$ be a simplicial complex on $[n]=\{1,\dots,n\}$. Write $I_\Delta$ for the
	Stanley--Reisner ideal and $k[\Delta]=S/I_\Delta$.
	For $W\subseteq[n]$, let $\mathbf{1}_W\in\{0,1\}^n$ denote its indicator multidegree, and
	set $\Delta_W:=\{\sigma\in\Delta\mid \sigma\subseteq W\}$.
	Then for all $i\ge 0$ and $W\subseteq[n]$,
	\[
	\beta_{i,\mathbf{1}_W}\bigl(k[\Delta]\bigr)
	\;=\;
	\dim_k\!\Bigl(\operatorname{Tor}_i^S\bigl(k[\Delta],k\bigr)_{\mathbf{1}_W}\Bigr)
	\;=\;
	\dim_k\,\widetilde H^{\,|W|-i-1}\!\bigl(\Delta_W;k\bigr).
	\]
\end{theorem}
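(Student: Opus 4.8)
The plan is to compute $\operatorname{Tor}^S_\bullet(k[\Delta],k)$ by resolving the residue field $k$ rather than $k[\Delta]$, using the Koszul complex $K_\bullet=K_\bullet(x_1,\dots,x_n;S)$. Since $x_1,\dots,x_n$ is a regular sequence, $K_\bullet$ is a minimal free resolution of $k$ over $S$, with $K_p=\bigwedge^p S^n=\bigoplus_{|F|=p}S\,e_F$, each generator $e_F$ carried in multidegree $\mathbf{1}_F$, and Koszul differential $\partial(e_F)=\sum_{j\in F}\pm\,x_j\,e_{F\setminus\{j\}}$. Then $\operatorname{Tor}^S_i(k[\Delta],k)=H_i\bigl(k[\Delta]\otimes_S K_\bullet\bigr)$ as $\mathbb{Z}^n$–graded modules, and by the identification $\beta_{i,\alpha}(k[\Delta])=\dim_k\operatorname{Tor}^S_i(k[\Delta],k)_\alpha$ recorded in the preliminaries, it suffices to compute the homology of the multidegree-$\mathbf{1}_W$ strand of $k[\Delta]\otimes_S K_\bullet$.

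Next I would make that strand explicit. In multidegree $\mathbf{1}_W$, the summand $k[\Delta]\,e_F$ contributes $(k[\Delta])_{\mathbf{1}_W-\mathbf{1}_F}$, which vanishes unless $F\subseteq W$; for $F\subseteq W$ the multidegree $\mathbf{1}_W-\mathbf{1}_F=\mathbf{1}_{W\setminus F}$ is squarefree, so the contribution is the one-dimensional space spanned by the monomial $x^{W\setminus F}$ exactly when $W\setminus F\in\Delta$, and is zero otherwise. Reindexing each surviving basis element by the face $\sigma:=W\setminus F\in\Delta_W$, the degree-$\mathbf{1}_W$ strand in homological degree $p$ acquires a $k$–basis indexed by $\{\sigma\in\Delta_W:\dim\sigma=|W|-p-1\}$, with the empty face $\sigma=\varnothing$ sitting in homological degree $p=|W|$. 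Tracking $\partial$ through this dictionary, deleting $j\in F$ corresponds to adjoining $j$ to $\sigma$, so the induced map raises the dimension of a face by one.

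The remaining step is to verify that, under this reindexing, the Koszul differential agrees---up to a consistent global choice of signs---with the reduced simplicial coboundary operator $\delta$ of $\Delta_W$, the empty face providing the augmentation. Granting this, the multidegree-$\mathbf{1}_W$ strand of $k[\Delta]\otimes_S K_\bullet$ read in homological degree $p$ is precisely the reduced cochain group $\widetilde{C}^{\,|W|-p-1}(\Delta_W;k)$ with its coboundary, so $\operatorname{Tor}^S_i(k[\Delta],k)_{\mathbf{1}_W}\cong\widetilde{H}^{\,|W|-i-1}(\Delta_W;k)$; taking $k$–dimensions yields the stated equalities. I expect the genuine work to be concentrated in this last step: reconciling the antisymmetry of the wedge basis $\{e_F\}$ and the signs in $\partial$ with the orientation signs in the simplicial coboundary, together with the small amount of care needed at the empty-face/augmentation end and in the degenerate cases $W=\varnothing$ or $\Delta_W=\{\varnothing\}$. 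The rest is bookkeeping about monomial supports and grading shifts; one should also note in passing that, because $I_\Delta$ is squarefree, nonzero multigraded Betti numbers occur only in squarefree multidegrees $\mathbf{1}_W$, so the strands treated above in fact account for the entire resolution.
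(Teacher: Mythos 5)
Your proposal follows essentially the same route as the paper's proof: resolve $k$ by the Koszul complex $K_\bullet(x_1,\dots,x_n;S)$, tensor with $k[\Delta]$, identify the multidegree-$\mathbf{1}_W$ strand in homological degree $i$ with the span of faces $\sigma=W\setminus F\in\Delta_W$ of cardinality $|W|-i$, and match the Koszul differential with the simplicial coboundary. The one step you defer --- the sign reconciliation --- is precisely where the paper concentrates its effort, via the explicit normalization $\Phi_W^{\,i}(e_F\otimes\overline{x^{\sigma}})=(-1)^{\binom{|\sigma|}{2}}\operatorname{sgn}(\pi_{F,\sigma})\,\sigma^{\ast}$ and the two parity identities that make the comparison square commute; since you correctly identify this as the remaining work and the rest of your outline is sound, the proposal is a faithful (if not fully executed) version of the paper's argument.
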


\begin{proof}
	Let $K_\bullet=K_\bullet(x_1,\dots,x_n;S)$ be the Koszul complex on $(x_1,\dots,x_n)$.
	Write $K_i\cong \bigwedge^i V\otimes_k S$ where $V=\bigoplus_{j=1}^n k\,e_j$.
	For $F=\{j_1<\cdots<j_i\}$ set $e_F:=e_{j_1}\wedge\cdots\wedge e_{j_i}$.
	The differential is
	\[
	d\bigl(e_F\otimes f\bigr)
	=\sum_{t=1}^i (-1)^{t-1}\,e_{F\setminus\{j_t\}}\otimes x_{j_t}f.
	\]
	Give $K_\bullet$ the $\mathbb{Z}^n$–grading by $\deg(e_j)=\mathbf{e}_j$, so
	$\deg(e_F\otimes x^{\alpha})=\sum_{j\in F}\mathbf{e}_j+\alpha$ and $d$ is homogeneous of
	multidegree $\mathbf{0}$. Since $K_\bullet$ is a free resolution of $k=S/(\mathbf{x})$,
	\[
	\operatorname{Tor}_i^S\bigl(k[\Delta],k\bigr)\ \cong\ H_i\!\bigl(k[\Delta]\otimes_S K_\bullet\bigr).
	\]
	
	Fix $W\subseteq[n]$. The homogeneous component
	$\bigl(k[\Delta]\otimes_S K_i\bigr)_{\mathbf{1}_W}$ is spanned by the classes of
	\[
	e_F \otimes \overline{x^{U}}
	\qquad\text{with } F\subseteq[n],\ |F|=i,\ U\subseteq[n],\ F\cap U=\varnothing,\ F\cup U=W,
	\]
	where $\overline{(\cdot)}$ is the image in $k[\Delta]=S/I_\Delta$ and
	$x^{U}:=\prod_{u\in U}x_u$. The multidegree condition is
	$\deg(e_F)+\deg(x^{U})=\mathbf{1}_W$, i.e.\ $F\cap U=\varnothing$ and $F\cup U=W$.
	
	In $k[\Delta]$, $\overline{x^{U}}\neq 0$ iff $U\in\Delta$. Therefore a $k$–basis of
	$\bigl(k[\Delta]\otimes_S K_i\bigr)_{\mathbf{1}_W}$ is
	\[
	\mathcal{B}_{i,W}
	=\bigl\{\, e_F\otimes \overline{x^{W\setminus F}}\ :\ F\subseteq W,\ |F|=i,\ W\setminus F\in\Delta\,\bigr\}.
	\]
	Equivalently, with $\sigma:=W\setminus F$, we index by faces $\sigma\in\Delta_W$ with
	$|\sigma|=|W|-i$.
	
	For $F\subseteq W$ and $\sigma=W\setminus F$,
	\[
	d\bigl(e_F\otimes \overline{x^\sigma}\bigr)
	=\sum_{j\in F} (-1)^{\operatorname{pos}(j;F)-1}\,
	e_{F\setminus\{j\}}\otimes \overline{x_j x^\sigma}
	=\sum_{j\in F} (-1)^{\operatorname{pos}(j;F)-1}\,
	e_{F\setminus\{j\}}\otimes \overline{x^{\sigma\cup\{j\}}}.
	\]
	A summand survives precisely when $\sigma\cup\{j\}\in\Delta$.
	For $i\ge 0$ and $F\subseteq W$ with $|F|=i$, set $\sigma:=W\setminus F$ and define
	\[
	\Phi_W^{\,i}:\ \bigl(k[\Delta]\otimes_S K_i\bigr)_{\mathbf{1}_W}\longrightarrow
	C^{\,|W|-i-1}(\Delta_W;k),\qquad
	\Phi_W^{\,i}\bigl(e_F\otimes \overline{x^{\,\sigma}}\bigr)
	\;:=\;
	(-1)^{\binom{|\sigma|}{2}}\,
	\operatorname{sgn}(\pi_{F,\sigma})\ \sigma^{\ast}.
	\]
	Here $\pi_{F,\sigma}$ is the unique permutation that reorders the concatenation
	$(\sigma, F)$ into the (increasing) order of $W$, and $\operatorname{sgn}(\cdot)$ is its sign.
	This is a well–defined $k$–isomorphism on the $\mathbf{1}_W$–multigraded piece. The simplicial
	coboundary $\delta:C^{q}(\Delta_W;k)\to C^{q+1}(\Delta_W;k)$ is taken with the standard sign
	convention:
	\[
	\delta(\sigma)\;=\;\sum_{\substack{\tau\in\Delta_W\\ \sigma\subset\tau,\ |\tau|=|\sigma|+1}}
	(-1)^{\operatorname{pos}(v;\tau)-1}\,\tau,
	\qquad \text{for }\tau=\sigma\cup\{v\},
	\]
	where $\operatorname{pos}(v;\tau)$ is the position of $v$ in the increasing ordering of $\tau$.

	For $j\in F$, put $\tau_j:=\sigma\cup\{j\}$.
	Tracing the basis element $e_F\otimes\overline{x^{\,\sigma}}$ around the square
	\[
	\begin{tikzcd}[column sep=large]
		\cdots \arrow[r,"d"] &
		\bigl(k[\Delta]\!\otimes\! K_{i}\bigr)_{\mathbf{1}_W}
		\arrow[r,"d"] \arrow[d,"\Phi_W^{i}"',"\cong"] &
		\bigl(k[\Delta]\!\otimes\! K_{i-1}\bigr)_{\mathbf{1}_W}
		\arrow[r,"d"] \arrow[d,"\Phi_W^{i-1}"',"\cong"] &
		\cdots \\[-1ex]
		\cdots \arrow[r,"\delta"'] &
		C^{\,|W|-i-1}(\Delta_W;k)
		\arrow[r,"\delta"'] &
		C^{\,|W|-i}(\Delta_W;k)
		\arrow[r,"\delta"'] &
		\cdots
	\end{tikzcd}
	\]
	we compare the coefficient of $\tau_j^\ast$ obtained by going “right then down” versus “down then right.”
	The Koszul differential contributes the factor $(-1)^{\operatorname{pos}(j;F)-1}$ when deleting $j$ from $F$,
	and the simplicial coboundary contributes $(-1)^{\operatorname{pos}(j;\tau_j)-1}$ when adding $j$ to $\sigma$.
	Thus the two routes yield, respectively,
	\[
	\text{(right then down)}\quad
	(-1)^{\operatorname{pos}(j;F)-1}\cdot
	(-1)^{\binom{|\tau_j|}{2}}\cdot
	\operatorname{sgn}\!\bigl(\pi_{F\setminus\{j\},\,\tau_j}\bigr),
	\]
	\[
	\text{(down then right)}\quad
	(-1)^{\binom{|\sigma|}{2}}\cdot
	\operatorname{sgn}\!\bigl(\pi_{F,\sigma}\bigr)\cdot
	(-1)^{\operatorname{pos}(j;\tau_j)-1}.
	\]
	
	Two elementary identities account for the sign reconciliation:
	\begin{align}
		\operatorname{sgn}\!\bigl(\pi_{F\setminus\{j\},\,\tau_j}\bigr)
		&=
		(-1)^{\operatorname{pos}(j;F)-1}\,
		(-1)^{\operatorname{pos}(j;\tau_j)-1}\,
		(-1)^{|\sigma|}\,
		\operatorname{sgn}\!\bigl(\pi_{F,\sigma}\bigr),
		\label{eq:perm-sign}\\
		\binom{|\tau_j|}{2}
		&=\binom{|\sigma|+1}{2}
		\equiv \binom{|\sigma|}{2}+|\sigma|\pmod 2.
		\label{eq:binom-parity}
	\end{align}
	Using \eqref{eq:perm-sign} and \eqref{eq:binom-parity}, the “right then down” coefficient equals
	\[
	(-1)^{\binom{|\tau_j|}{2}+|\sigma|}\,
	(-1)^{2(\operatorname{pos}(j;F)-1)}\,
	\operatorname{sgn}(\pi_{F,\sigma})\,
	(-1)^{\operatorname{pos}(j;\tau_j)-1}
	=
	(-1)^{\binom{|\sigma|}{2}}\,
	\operatorname{sgn}(\pi_{F,\sigma})\,
	(-1)^{\operatorname{pos}(j;\tau_j)-1},
	\]
	which is precisely the “down then right” coefficient. Hence
	\[
	\Phi_W^{\,i-1}\circ d\;=\;\delta\circ \Phi_W^{\,i}.
	\]

	The preceding identity shows that, on the multidegree $\mathbf{1}_W$,
	\[
	\bigl(k[\Delta]\otimes_S K_\bullet\bigr)_{\mathbf{1}_W}
	\ \cong\ C^{\,|W|-1-\bullet}(\Delta_W;k)
	\quad\text{as chain complexes.}
	\]
	Therefore
	\[
	H_i\!\bigl((k[\Delta]\otimes_S K_\bullet)_{\mathbf{1}_W}\bigr)
	\ \cong\ \widetilde H^{\,|W|-i-1}(\Delta_W;k),
	\]
	and, taking $k$–dimensions of multigraded pieces,
	\[
	\beta_{i,\mathbf{1}_W}\bigl(k[\Delta]\bigr)
	=\dim_k \operatorname{Tor}_i^S\bigl(k[\Delta],k\bigr)_{\mathbf{1}_W}
	=\dim_k \widetilde H^{\,|W|-i-1}(\Delta_W;k).\qedhere
	\]

\end{proof}

In particular, for a simple graph $G$ on vertex set $[n]$, Hochster’s formula gives,
for every $W\subseteq [n]$ and $i\ge 0$,
\[
\beta_{i,\mathbf{1}_W}\!\bigl(S/I(G)\bigr)
\;=\;
\dim_k \widetilde H^{\,|W|-i-1}\!\bigl(\operatorname{Ind}(G)_W;k\bigr),
\]
where $\operatorname{Ind}(G)_W$ denotes the subcomplex of the independence complex
$\operatorname{Ind}(G)$ induced on $W$.

From the short exact sequence $0\to I(G)\to S\to S/I(G)\to 0$ and the flatness of $S$ over itself,
the long exact sequence in $\operatorname{Tor}$ yields degreewise isomorphisms
\[
\operatorname{Tor}_i^S\!\bigl(S/I(G),k\bigr)_{\mathbf{1}_W}
\;\cong\;
\operatorname{Tor}_{i-1}^S\!\bigl(I(G),k\bigr)_{\mathbf{1}_W}
\qquad\text{for all }i\ge 1\text{ and }W\subseteq[n].
\]
Hence the multigraded Betti numbers satisfy
\[
\beta_{i-1,\mathbf{1}_W}\!\bigl(I(G)\bigr)
\;=\;
\beta_{i,\mathbf{1}_W}\!\bigl(S/I(G)\bigr)
\qquad(i\ge 1,\ W\subseteq[n]).
\]
In addition, $\beta_{0,\mathbf{0}}(S/I(G))=1$ and $\beta_{0,u}(S/I(G))=0$ for $u\neq \mathbf{0}$. On the other hand, for the edge ideal $I(G)$ one has
\[
\beta_{0,j}\!\bigl(I(G)\bigr)=
\begin{cases}
	|E(G)|, & \text{if } j=2,\\[2pt]
	0, & \text{if } j\neq 2,
\end{cases}
\]
since $\beta_{0,j}(I)$ counts the number of minimal generators of $I$ in degree $j$, and
$I(G)$ is minimally generated by the quadrics $\{x_ix_j:\{i,j\}\in E(G)\}$.
Equivalently, in the multigraded form:
$\beta_{0,\mathbf e_i+\mathbf e_j}\!\bigl(I(G)\bigr)=1$ if $\{i,j\}\in E(G)$ and $0$ otherwise.

\begin{example}[Path on three vertices]\label{ex:P3}
	Let $G=P_3$ with $V=\{1,2,3\}$ and $E=\{\{1,2\},\{2,3\}\}$. Then
	\[
	I(G)\;=\;(x_1x_2,\;x_2x_3)\;\subseteq\;S=k[x_1,x_2,x_3].
	\]
	The minimal vertex covers are $C_1=\{2\}$ and $C_2=\{1,3\}$. Therefore
	\[
	I(G)\;=\;(x_2)\ \cap\ (x_1,x_3).
	\]
	The independence complex is
	\[
	\mathrm{Ind}(G)=\bigl\{\varnothing,\{1\},\{2\},\{3\},\{1,3\}\bigr\},
	\]
	whose {facets} (inclusion–maximal faces) are $\{1,3\}$ and $\{2\}$.
	Applying Hochster’s formula to $\Delta=\mathrm{Ind}(G)$ gives, for the graded Betti numbers of $S/I(G)$,
	\[
	\beta_{0,0}=1,\qquad \beta_{1,2}=2,\qquad \beta_{2,3}=1,
	\]
	and $\beta_{i,j}=0$ for all other pairs $(i,j)$. Equivalently, the only nonzero entries occur in homological degrees $i=0,1,2$ and internal degrees $j=0,2,3$, at the positions listed above.
	With rows indexed by internal degree $j$ and columns by homological degree $i$:
	\[
	\begin{array}{c|ccc}
		& i=0 & i=1 & i=2\\ \hline
		j=0 & 1 & 0 & 0\\
		j=1 & 0 & 0 & 0\\
		j=2 & 0 & 2 & 0\\
		j=3 & 0 & 0 & 1
	\end{array}
	\]
	The resolution is
	\[
	0 \longleftarrow I(G)
	\longleftarrow S(-2)^2
	\longleftarrow S(-3)
	\longleftarrow 0.
	\]
\end{example}

\begin{example}[Star graphs and the star ideal]\label{ex:star}
	Let $G$ be a star on $t+1$ vertices with center $x$ and leaves $N(x)=\{x_1,\dots,x_t\}$, i.e.,
	\[
	E(G)=\bigl\{\{x,x_1\},\dots,\{x,x_t\}\bigr\}.
	\]
	Then the edge ideal is the {star ideal}
	\[
	I(G)\;=\;\langle\, x x_1,\dots, x x_t\,\rangle \;=\; x\,\langle x_1,\dots,x_t\rangle \;\subseteq\; S=k[x,x_1,\dots,x_t].
	\]
	A vertex cover is either $\{x\}$ or the full leaf set $\{x_1,\dots,x_t\}$; both are minimal. Consequently,
	\[
	I(G)\;=\;(x)\ \cap\ (x_1,\dots,x_t),
	\]
	We have
	\[
	\mathrm{Ind}(G)\;=\;\bigl(\text{simplex on }\{x_1,\dots,x_t\}\bigr)\ \sqcup\ \{x\},
	\]
	i.e., the disjoint union of a $(t-1)$–simplex and an isolated vertex.
	Write $S/I(G)$ for the Stanley–Reisner ring of $\mathrm{Ind}(G)$. Since every induced subcomplex on a subset $W$ that contains $x$ and at least one leaf has exactly two connected components, Hochster’s formula yields:
	\[
	\beta_{0,0}=1,\qquad
	\beta_{j-2,\,j}=\binom{t}{\,j-1\,}\ \ \text{for }2\le j\le t+1,\qquad
	\beta_{i,j}=0\ \text{otherwise.}
	\]
	Equivalently,
	\[
	\beta_{i,\,i+2}=\binom{t}{\,i+1\,}\quad\text{for }0\le i\le t-1,
	\]
	with all remaining graded Betti numbers vanishing. Thus the nonzero entries lie on the single diagonal $j=i+2$ (together with $\beta_{0,0}$), so the resolution is $2$–linear.
	In particular, the minimal graded free resolution has the form
	\[
	0 \longleftarrow I(G) 
	\longleftarrow S(-2)^{\binom{t}{1}}
	\longleftarrow S(-3)^{\binom{t}{2}}
	\longleftarrow \cdots
	\longleftarrow S\bigl(-(t+1)\bigr)^{\binom{t}{t}}
	\longleftarrow 0.
	\]
\end{example}

\begin{example}[Complete graph]\label{ex:Kn}
	Let $G=K_n$. Then
	\[
	I(G)\;=\;\bigl(x_ix_j \mid 1\le i<j\le n\bigr)\ \subseteq\ S=k[x_1,\dots,x_n].
	\]
	A minimal vertex cover has size $n-1$, and in fact {every} $(n-1)$–subset of $V$ is a minimal vertex cover. Consequently,
	\[
	I(K_n)\;=\;\bigcap_{\substack{C\subseteq V\\ |C|=n-1}} (x_i \mid i\in C),
	\qquad
	\operatorname{ht} I(K_n)=n-1.
	\]
	Here $\mathrm{Ind}(G)$ consists precisely of the empty face and the singletons:
	\[
	\mathrm{Ind}(K_n)=\bigl\{\varnothing,\{1\},\dots,\{n\}\bigr\},
	\]
	so every induced subcomplex $\mathrm{Ind}(K_n)_W$ is a discrete set of $|W|$ points. Hochster’s formula therefore yields a linear (pure) pattern for the Betti numbers of $S/I(K_n)$:
	\[
	\beta_{0,0}=1,\qquad
	\beta_{j-1,\,j}=\binom{n}{j}\,(j-1)\ \text{ for } 2\le j\le n,\qquad
	\beta_{i,j}=0\ \text{ otherwise.}
	\]
	Equivalently,
	\[
	\beta_{i,\,i+1}= i\,\binom{n}{\,i+1\,}\quad\text{for }1\le i\le n-1,
	\]
	and all remaining graded Betti numbers vanish. In particular, the nonzero entries are supported in internal degrees $j=0,2,3,\dots,n$, with a single diagonal $j=i+1$ (besides $\beta_{0,0}$).
	The minimal graded free resolution is pure and $1$–linear after the initial step:
	\[
	0 \longleftarrow I(K_n)
	\longleftarrow S(-2)^{\binom{n}{2}}
	\longleftarrow S(-3)^{2\binom{n}{3}}
	\longleftarrow \cdots
	\longleftarrow S(-n)^{(n-1)\binom{n}{n}}
	\longleftarrow 0.
	\]
\end{example}

Edge ideals often admit a special splitting that allow one to compute graded Betti numbers recursively. The guiding notion is the following.

\begin{definition}[Betti splitting, \cite{VanTuyl2013_BeginnerGuideEdgeCoverIdeals, Francisco2009_SplittingsOfMonomialIdeals}]
	Let $I\subseteq S$ be a monomial ideal with a decomposition
	\[
	I \;=\; J \;+\; K
	\]
	into monomial ideals $J,K\subseteq S$ such that the sets of minimal monomial generators are disjoint,
	$\mathcal{G}(I)=\mathcal{G}(J)\sqcup \mathcal{G}(K)$. We say that $I=J+K$ is a {Betti splitting} if, for all $i\ge 0$ and all $j\in\mathbb{Z}$,
	\[
	\beta_{i,j}(I)\;=\;\beta_{i,j}(J)\;+\;\beta_{i,j}(K)\;+\;\beta_{i-1,j}\bigl(J\cap K\bigr),
	\]
	with the convention $\beta_{-1,j}(\,\cdot\,)=0$. Equivalently, the same identity holds in the multigraded setting. 
\end{definition}

\noindent
In practice, one verifies the Betti splitting property via well–known sufficient criteria (e.g., vanishing of the natural maps on $\operatorname{Tor}$ or linear–quotient hypotheses on $J$ and $K$); see standard references for details. The point is that a Betti splitting reduces the Betti numbers of $I$ to those of $J$, $K$, and their intersection $J\cap K$.

\medskip

\noindent
We now record the vertex decomposition for edge ideals, which provides a canonical recursive step \cite{VanTuyl2013_BeginnerGuideEdgeCoverIdeals, Francisco2009_SplittingsOfMonomialIdeals}.

\begin{theorem}[ \cite{VanTuyl2013_BeginnerGuideEdgeCoverIdeals, Francisco2009_SplittingsOfMonomialIdeals}]
	Let $G$ be a finite simple graph on $V=\{1,\dots,n\}$, fix a vertex $x\in V$, and assume that $G\setminus\{x\}$ is not a graph of isolated vertices. Write the (open) neighborhood of $x$ as $N(x)=\{x_1,\dots,x_t\}$. Set
	\[
	J \;:=\; \langle\, x x_1, \dots, x x_t \,\rangle
	\qquad\text{and}\qquad
	K \;:=\; I\bigl(G\setminus\{x\}\bigr).
	\]
	Then
	\[
	I(G)\;=\;J\;+\;K
	\]
	is a Betti splitting of $I(G)$. Moreover, the intersection decomposes explicitly as
	\[
	J\cap K
	\;=\;
	\langle x x_1,\dots, x x_t\rangle \cap I\bigl(G\setminus\{x\}\bigr)
	\;=\;
	x\, I\bigl(G(x)\bigr)\;+\; x x_1\, I(G_1)\;+\;\cdots\;+\; x x_t\, I(G_t),
	\]
	where, for any vertex $a\in V$, the auxiliary graph $G(a)$ is defined on $V\setminus\{a\}$ by
	\[
	E\bigl(G(a)\bigr)
	\;:=\;
	\bigl\{\, \{u,v\}\in E(G) \ \bigm|\ \{u,v\}\cap N(a)\neq\emptyset,\ u\neq a,\ v\neq a \,\bigr\},
	\]
	and $G_i:=G\setminus (N(x)\cup N(x_i))$ for $i=1,\dots,t$.
\end{theorem}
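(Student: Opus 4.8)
The plan is to prove the two assertions in turn: that $I(G)=J+K$ is a Betti splitting, and then the explicit description of $J\cap K$. The identity $I(G)=J+K$ with disjoint generating sets is immediate, since the minimal monomial generators of $I(G)$ divisible by $x$ are exactly $xx_1,\dots,xx_t$ (the edges incident to $x$) while the remaining ones are exactly the edge monomials of $G\setminus\{x\}$, so $\mathcal G(I(G))=\mathcal G(J)\sqcup\mathcal G(K)$; the hypothesis that $G\setminus\{x\}$ is not a graph of isolated vertices guarantees $K\neq(0)$, so the decomposition is non-degenerate. To get the Betti-splitting identity I would start from the Mayer--Vietoris short exact sequence of graded $S$-modules
$$0\longrightarrow J\cap K \xrightarrow{\,a\mapsto(a,a)\,} J\oplus K \xrightarrow{\,(b,c)\mapsto b-c\,} J+K\longrightarrow 0,$$
apply $-\otimes_S k$, and read off the long exact sequence in $\operatorname{Tor}$. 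A degreewise rank count in that sequence shows that $\beta_{i,j}(I(G))=\beta_{i,j}(J)+\beta_{i,j}(K)+\beta_{i-1,j}(J\cap K)$ holds for all $i,j$ exactly when every map $\alpha_i\colon\operatorname{Tor}^S_i(J\cap K,k)\to\operatorname{Tor}^S_i(J,k)\oplus\operatorname{Tor}^S_i(K,k)$ induced by the two inclusions vanishes; so the whole problem reduces to proving $\alpha_i=0$ for all $i$.

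The two components of $\alpha_i$ vanish for different reasons. For the $\operatorname{Tor}^S_i(K,k)$-component: every generator of $J$ is divisible by $x$, so $J\subseteq xS$; since no generator of $K$ involves $x$ one has $xS\cap K=xK$, whence $J\cap K\subseteq xK$, and the inclusion $J\cap K\hookrightarrow K$ factors through $xK\hookrightarrow K$. Under the degree-shift isomorphism $xK\cong K(-1)$ the map $xK\hookrightarrow K$ becomes multiplication by $x$ on $\operatorname{Tor}^S_i(K,k)$, which is $0$ because $\operatorname{Tor}^S_i(K,k)$ is a $k$-vector space on which $x\in\mathfrak m=\ker(S\to k)$ acts as zero; hence this component vanishes. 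For the $\operatorname{Tor}^S_i(J,k)$-component: $J=x\langle x_1,\dots,x_t\rangle$ and $\langle x_1,\dots,x_t\rangle$ is resolved by a Koszul complex, so $J$ has a $2$-linear free resolution and $\operatorname{Tor}^S_i(J,k)$ is concentrated in internal degree exactly $i+2$; on the other hand $J\cap K$ is generated in degrees $\ge 3$ (each generator is an lcm of two quadratic monomials whose gcd, being coprime to $x$, has degree $\le 1$), so $\operatorname{Tor}^S_i(J\cap K,k)$ lives only in internal degrees $\ge i+3$. The supporting internal degrees being disjoint forces this component to vanish too. Thus $\alpha_i=0$ and the Betti-splitting identity holds.

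For the intersection formula I would use that $J\cap K$ is generated by the pairwise least common multiples $\operatorname{lcm}(xx_i,\,x_ux_v)$ with $i\in\{1,\dots,t\}$ and $\{u,v\}\in E(G\setminus\{x\})$, and classify them. If $x_i\in\{u,v\}$, say $u=x_i$, the lcm equals $x\cdot x_ix_v$, where $\{x_i,v\}$ is an edge of $G$ meeting $N(x)$ and avoiding $x$, i.e.\ a generator of $x\,I(G(x))$. If $x_i\notin\{u,v\}$, the lcm is $xx_ix_ux_v$, and I would split on whether $\{u,v\}$ meets $N(x)\cup N(x_i)$: if it does, then either $\{u,v\}$ itself or some $\{x_i,w\}$ with $w\in\{u,v\}$ is an edge of $G$ meeting $N(x)$ and avoiding $x$, so a generator of $x\,I(G(x))$ divides $xx_ix_ux_v$ and the lcm is absorbed; if it does not, then $u,v\notin N(x)\cup N(x_i)$, so $\{u,v\}\in E(G_i)$ and the lcm $(xx_i)\cdot x_ux_v$ is a generator of $xx_i\,I(G_i)$. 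This gives $J\cap K\subseteq x\,I(G(x))+\sum_i xx_i\,I(G_i)$. For the reverse inclusion I would check on generators: each $x\,x_ux_v$ with $\{u,v\}\in E(G(x))$ lies in $K$ (such an edge avoids $x$) and in $J$ (one endpoint lies in $N(x)$, making $xx_u$ or $xx_v$ a generator of $J$); and each $xx_i\,x_ux_v$ with $\{u,v\}\in E(G_i)$ lies in $J$ (divisible by the generator $xx_i$) and in $K$ (edges of $G_i$ avoid $x$). Hence both summands on the right lie in $J\cap K$, and the formula follows.

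The main obstacle is the vanishing of $\alpha_i$: its two target summands need genuinely different mechanisms --- the $x$-divisibility of $J$ together with the principle that multiplication by a variable annihilates $\operatorname{Tor}$ into $k$ on the $K$-side, and the $2$-linearity of the star ideal together with a strict internal-degree gap on the $J$-side --- so no single uniform argument works, and isolating this split is the conceptual crux. By contrast, the lcm bookkeeping for $J\cap K$ is routine; the only point demanding care is to verify that every cross lcm with $x_i\notin\{u,v\}$ and $\{u,v\}$ meeting $N(x)\cup N(x_i)$ is indeed a multiple of a generator of $x\,I(G(x))$, so that nothing is left unassigned to one of the two families on the right-hand side.
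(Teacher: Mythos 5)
Your proof is correct. Note that the paper itself gives no argument for this theorem --- it is recorded as a known result with citations to Van Tuyl and to Francisco--H\`a--Van Tuyl --- so there is no in-paper proof to compare against; what you have written is essentially a faithful reconstruction of the standard argument from those sources. The reduction to the vanishing of the maps $\operatorname{Tor}_i^S(J\cap K,k)\to\operatorname{Tor}_i^S(J,k)\oplus\operatorname{Tor}_i^S(K,k)$ via the Mayer--Vietoris sequence is exactly the Francisco--H\`a--Van Tuyl criterion, and your two vanishing mechanisms are the right ones: the $K$-component dies because $J\cap K\subseteq (x)\cap K=xK$ and the inclusion $xK\hookrightarrow K$ induces multiplication by $x$ on $\operatorname{Tor}$, which is zero since $\operatorname{Tor}_i^S(K,k)$ is annihilated by $\mathfrak m$ (this is the ``$x$-splitting'' argument); the $J$-component dies by the degree-support comparison, since $J\cong (x_1,\dots,x_t)(-1)$ has a $2$-linear (truncated Koszul) resolution so $\operatorname{Tor}_i^S(J,k)$ sits in internal degree exactly $i+2$, while $J\cap K$ is generated in degrees $\ge 3$ and hence $\beta_{i,j}(J\cap K)=0$ for $j<i+3$ by the standard fact that minimal resolutions raise generating degrees by at least one per homological step. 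The lcm classification establishing $J\cap K = x\,I(G(x))+\sum_r xx_r I(G_r)$ is also complete in both directions; your case analysis correctly shows every cross-lcm with $\{u,v\}$ meeting $N(x)\cup N(x_i)$ is absorbed by a generator of $x\,I(G(x))$, and the remaining ones land in $xx_i\,I(G_i)$. No gaps.
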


\noindent
Consequently, the graded Betti numbers satisfy the recursive identity
\[
\beta_{i,j}\bigl(I(G)\bigr)
\;=\;
\beta_{i,j}(J)
\;+\;
\beta_{i,j}\!\bigl(I(G\setminus\{x\})\bigr)
\;+\;
\beta_{i-1,j}\!\Bigl( x\, I\bigl(G(x)\bigr)\;+\;\sum_{r=1}^t x x_r\, I(G_r) \Bigr),
\]
and similarly in the multigraded setting. This vertex–based splitting is standard in the literature and underlies many inductive computations of Betti tables for edge ideals \cite{ha2008monomial}.

\section{Persistent graded Betti numbers of modules and monomial ideals}

The goal of this section is to extend familiar graded Betti invariants to a functorial,
two–time–point setting. Throughout, a filtration $(M(t))_{t\in T}$ (or $(\Delta_t)_{t\in T}$)
comes equipped with degree–preserving structure maps for $a\le b$, and every definition below
reduces to the usual invariant when $a=b$.

\begin{definition}[Persistent graded Betti numbers]\label{def:persistent-betti}
	Let $\{M(t)\}_{t\in T}$ be a filtration of graded $S$–modules indexed by a totally ordered set $(T,\le)$, and suppose for $a\le b$ we are given structure maps $M(a)\to M(b)$ that are degree-preserving $S$–homomorphisms.
	For $i,j\in\mathbb{Z}$ define the {persistent graded Betti numbers} by
	\[
	\beta_{i,j}^{\,a, b}(M(\bullet))
	\;:=\;
	\operatorname{rank}\!\left(
	\operatorname{Tor}_i^S\bigl(k,M(a)\bigr)_j
	\longrightarrow
	\operatorname{Tor}_i^S\bigl(k,M(b)\bigr)_j
	\right),
	\qquad a\le b.
	\]
	When $a=b$ this recovers the usual Betti numbers: $\beta_{i,j}^{\,a\to a}(M(\bullet))=\beta_{i,j}\bigl(M(a)\bigr)$.
	In the multigraded case, replace $j$ by $\alpha\in\mathbb{Z}^n$:
	\[
	\beta_{i,\alpha}^{\,a, b}(M(\bullet))
	\;:=\;
	\operatorname{rank}\!\left(
	\operatorname{Tor}_i^S\bigl(k,M(a)\bigr)_\alpha
	\longrightarrow
	\operatorname{Tor}_i^S\bigl(k,M(b)\bigr)_\alpha
	\right).
	\]
\end{definition}

\noindent
Definition~\ref{def:persistent-betti} packages the classical graded Betti numbers into a
rank of the comparison map induced by $M(a)\to M(b)$. In particular, taking $a=b$
recovers $\beta_{i,\bullet}(M(a))$, so all results that follow are genuine
persistent extensions of the standard theory.

\noindent
The next statement is the persistent counterpart of Hochster’s formula: it identifies
the ranks in Definition~\ref{def:persistent-betti} for Stanley–Reisner rings with the
ranks of cohomology restriction maps along $(\Delta_a)_W\hookrightarrow(\Delta_b)_W$—and
when $a=b$ it collapses to the classical Hochster isomorphism degreewise.

\begin{theorem}[Persistent Hochster formula (multigraded)]\label{thm:persistent-hochster}
	Let $S=k[x_1,\dots,x_n]$ with $\deg(x_i)=\mathbf e_i$ and let
	$\{\Delta_t\}_{t\in T}$ be a filtration of simplicial complexes on $[n]$
	with inclusions $\Delta_a\subseteq \Delta_b$ for $a\le b$.
	Write $I_{\Delta_t}$ for the Stanley--Reisner ideal and $k[\Delta_t]=S/I_{\Delta_t}$.
	For $W\subseteq[n]$, set $(\Delta_t)_W:=\{\sigma\in\Delta_t:\sigma\subseteq W\}$.
	Then for all $i\ge 0$ and $W\subseteq[n]$,
	\[
	\begin{aligned}
		\beta^{\,a, b}_{i,\mathbf{1}_W}\!\bigl(k[\Delta_\bullet]\bigr)
		&:= \operatorname{rank}\!\Bigl(
		\operatorname{Tor}_i^S\bigl(k[\Delta_b],k\bigr)_{\mathbf{1}_W}
		\to
		\operatorname{Tor}_i^S\bigl(k[\Delta_a],k\bigr)_{\mathbf{1}_W}
		\Bigr)
		\\[2pt]
		&=\ \operatorname{rank}\!\Bigl(
		\widetilde H^{\,|W|-i-1}\!\bigl((\Delta_b)_W;k\bigr)
		\to
		\widetilde H^{\,|W|-i-1}\!\bigl((\Delta_a)_W;k\bigr)
		\Bigr).
	\end{aligned}
	\]
\end{theorem}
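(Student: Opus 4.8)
The plan is to run the chain-level argument behind the classical Hochster formula \emph{simultaneously} for $\Delta_a$ and $\Delta_b$ and to check that the whole construction is natural in the complex, so that the comparison map on $\operatorname{Tor}$ is carried by the two Hochster isomorphisms onto the cochain restriction map along $(\Delta_a)_W\hookrightarrow(\Delta_b)_W$; taking ranks then gives the statement, and $a=b$ collapses it to the classical case. Fix $W\subseteq[n]$ and work in the $\mathbf{1}_W$–graded strand throughout. The inclusion $\Delta_a\subseteq\Delta_b$ is the same datum as the containment of Stanley--Reisner ideals $I_{\Delta_b}\subseteq I_{\Delta_a}$, hence as a surjection of $\mathbb{Z}^n$–graded $S$–algebras $\pi\colon k[\Delta_b]\twoheadrightarrow k[\Delta_a]$. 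Since $\operatorname{Tor}_i^S(-,k)$ is computed functorially in the first variable by tensoring against the single, $\Delta$–independent Koszul complex $K_\bullet$ on $x_1,\dots,x_n$, the map $\pi$ induces a morphism of complexes $\pi\otimes\mathrm{id}\colon \bigl(k[\Delta_b]\otimes_S K_\bullet\bigr)_{\mathbf{1}_W}\to\bigl(k[\Delta_a]\otimes_S K_\bullet\bigr)_{\mathbf{1}_W}$ whose induced map on $H_i$ is precisely the comparison map $\operatorname{Tor}_i^S(k[\Delta_b],k)_{\mathbf{1}_W}\to\operatorname{Tor}_i^S(k[\Delta_a],k)_{\mathbf{1}_W}$ in the statement.

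The crux is that the isomorphism $\Phi_W^{\,\bullet}$ constructed in the proof of the classical Hochster formula is given by \emph{one and the same formula} for every simplicial complex on $[n]$: on the basis vector $e_F\otimes\overline{x^{\,\sigma}}$ (with $\sigma=W\setminus F$) it outputs $(-1)^{\binom{|\sigma|}{2}}\operatorname{sgn}(\pi_{F,\sigma})\,\sigma^{\ast}$, and this scalar depends only on $F$, $\sigma$, and the fixed increasing order of $[n]$ — never on whether $\sigma$ is a face of $\Delta_a$ or of $\Delta_b$. On the algebraic side, $\pi\otimes\mathrm{id}$ sends the basis vector indexed by $\sigma\in(\Delta_b)_W$ to the one indexed by $\sigma$ when $\sigma\in(\Delta_a)_W$ and to $0$ otherwise, precisely because $\overline{x^{\,\sigma}}=0$ in $k[\Delta_a]$ exactly when $\sigma\notin\Delta_a$. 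On the topological side, the cochain restriction $C^{q}\bigl((\Delta_b)_W;k\bigr)\to C^{q}\bigl((\Delta_a)_W;k\bigr)$ along the subcomplex inclusion does the identical thing to the dual basis elements $\sigma^{\ast}$. Hence the square
\[
\begin{tikzcd}[column sep=huge]
\bigl(k[\Delta_b]\otimes_S K_\bullet\bigr)_{\mathbf{1}_W}
\arrow[r,"\Phi^{\bullet}_{W,b}","\cong"']
\arrow[d,"\pi\otimes\mathrm{id}"'] &
C^{\,|W|-1-\bullet}\bigl((\Delta_b)_W;k\bigr)
\arrow[d,"\mathrm{res}"] \\
\bigl(k[\Delta_a]\otimes_S K_\bullet\bigr)_{\mathbf{1}_W}
\arrow[r,"\Phi^{\bullet}_{W,a}","\cong"'] &
C^{\,|W|-1-\bullet}\bigl((\Delta_a)_W;k\bigr)
\end{tikzcd}
\]
commutes as a square of complexes, the horizontal arrows already being chain isomorphisms by the classical proof; here $C^{\,\bullet}$ denotes the augmented (reduced) simplicial cochain complex whose cohomology is $\widetilde H^{\,\bullet}$. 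Taking $H_i$ of both columns, the left column yields the $\operatorname{Tor}$ comparison map and the right column yields the restriction map $\widetilde H^{\,|W|-i-1}\bigl((\Delta_b)_W;k\bigr)\to\widetilde H^{\,|W|-i-1}\bigl((\Delta_a)_W;k\bigr)$; the two vertical isomorphisms identify them, so the maps have equal rank, which is the asserted equality. Setting $a=b$ makes $\pi$ the identity and recovers the classical Hochster isomorphism degreewise.

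The main obstacle is not conceptual but a matter of careful bookkeeping in the two ``one-line'' observations above: one must confirm that $\pi\otimes\mathrm{id}$ and the simplicial cochain restriction really are the literal ``project onto the sub-basis, annihilate the rest'' maps in the bases used by the classical proof, with \emph{no} extra signs introduced, and that the reduced/augmented conventions are compatible with restriction — equivalently, that the augmentation maps $C^{\,-1}\to k$ commute with $\mathrm{res}$, which is what governs the boundary case $|W|=i$ where $\widetilde H^{-1}$ and the empty face enter. Because the Hochster scalars $(-1)^{\binom{|\sigma|}{2}}\operatorname{sgn}(\pi_{F,\sigma})$ are manifestly independent of the ambient complex, commutativity of the displayed square reduces to these basis-level checks, after which functoriality of $\operatorname{Tor}$ and of $-\otimes_S K_\bullet$ completes the argument.
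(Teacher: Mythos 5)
Your argument is correct and follows the same route as the paper's own (sketch of) proof: use the surjection $k[\Delta_b]\twoheadrightarrow k[\Delta_a]$, functoriality of $\operatorname{Tor}$ computed via the fixed Koszul complex, and the observation that the Hochster chain isomorphisms $\Phi_W^{\bullet}$ intertwine $\pi\otimes\mathrm{id}$ with the simplicial cochain restriction, then take ranks. In fact you supply the basis-level naturality check (that both vertical maps are the sign-free ``project onto the sub-basis'' maps) that the paper's ``Idea of proof'' only indicates by displaying the commutative squares, so your write-up is, if anything, more complete.
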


\begin{proof}[Idea of proof]
	By Hochster’s isomorphism,
	$\operatorname{Tor}_i^S(k[\Delta_t],k)_{\mathbf{1}_W}\cong
	\widetilde H^{\,|W|-i-1}((\Delta_t)_W;k)$ for each $t$ and $W$.
	If $a\le b$, the inclusion $\Delta_a\subseteq\Delta_b$ gives
	$I_{\Delta_b}\subseteq I_{\Delta_a}$ and hence a {surjection}
	$k[\Delta_b]\twoheadrightarrow k[\Delta_a]$.
	Functoriality of $\operatorname{Tor}$ in the first variable yields the map
	\[
	\operatorname{Tor}_i^S\bigl(k[\Delta_b],k\bigr)_{\mathbf{1}_W}\longrightarrow
	\operatorname{Tor}_i^S\bigl(k[\Delta_a],k\bigr)_{\mathbf{1}_W},
	\]
	which corresponds, under Hochster’s identification and the inclusion
	$(\Delta_a)_W\subseteq(\Delta_b)_W$, to the restriction map in cohomology
	\[
	\widetilde H^{\,|W|-i-1}\!\bigl((\Delta_b)_W;k\bigr)\longrightarrow
	\widetilde H^{\,|W|-i-1}\!\bigl((\Delta_a)_W;k\bigr),
	\]
	according to the below diagrams.
	\[
	\begin{tikzcd}[column sep=huge]
		\bigl(k[\Delta_b]\!\otimes\! K_{i}\bigr)_{\mathbf{1}_W}
		\arrow[r,"d"] \arrow[d] &
		\bigl(k[\Delta_b]\!\otimes\! K_{i-1}\bigr)_{\mathbf{1}_W}
		\arrow[d] \\
		\bigl(k[\Delta_a]\!\otimes\! K_{i}\bigr)_{\mathbf{1}_W}
		\arrow[r,"d"] &
		\bigl(k[\Delta_a]\!\otimes\! K_{i-1}\bigr)_{\mathbf{1}_W}
	\end{tikzcd}
	\longleftrightarrow
	\begin{tikzcd}[column sep=huge]
		C^{\,|W|-i-1}((\Delta_b)_W;k)
		\arrow[r,"\delta"] \arrow[d,"i_W^{*}"'] &
		C^{\,|W|-i}((\Delta_b)_W;k)
		\arrow[d,"i_W^{*}"'] \\
		C^{\,|W|-i-1}((\Delta_a)_W;k)
		\arrow[r,"\delta"'] &
		C^{\,|W|-i}((\Delta_a)_W;k)
	\end{tikzcd}
	\]
	Taking ranks gives the stated equality.

\end{proof}

\noindent
Summing the multigraded statement over all $W\subseteq[n]$ with $|W|=j$ yields the
$\mathbb{Z}$–graded version: the persistent Betti number in internal degree $j$
is the sum of the ranks of the induced maps on $\widetilde H^{\,j-i-1}((\Delta_t)_W;k)$.

\begin{corollary}[Persistent Hochster, $\mathbb{Z}$–graded]
	For the $\mathbb{Z}$–graded Betti numbers (internal degree $j$),
	\[
	\beta^{\,a, b}_{i,j}\!\bigl(k[\Delta_\bullet]\bigr)
	\;=\;
	\sum_{\substack{W\subseteq[n]\\ |W|=j}}
	\operatorname{rank}\!\Bigl(
	\widetilde H^{\,j-i-1}\!\bigl((\Delta_b)_W;k\bigr)
	\longrightarrow
	\widetilde H^{\,j-i-1}\!\bigl((\Delta_a)_W;k\bigr)
	\Bigr),
	\qquad \Delta_a\subseteq \Delta_b.
	\]
\end{corollary}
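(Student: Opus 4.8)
The plan is to derive the $\mathbb{Z}$–graded statement from the multigraded Theorem~\ref{thm:persistent-hochster} by the same coarsening device used in Section~2.1 to pass from multigraded to singly graded Betti numbers, combined with the elementary fact that the rank of a block–diagonal linear map is the sum of the ranks of its diagonal blocks.

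First I would recall that for any finitely generated $\mathbb{Z}^n$–graded $S$–module $M$ the $k$–vector space $\operatorname{Tor}_i^S(M,k)$ is finite dimensional and inherits a $\mathbb{Z}^n$–grading, so its graded piece in total degree $j$ splits as a finite direct sum $\operatorname{Tor}_i^S(M,k)_j=\bigoplus_{|\alpha|=j}\operatorname{Tor}_i^S(M,k)_\alpha$. Applying this to $M=k[\Delta_a]$ and $M=k[\Delta_b]$ and using that $I_{\Delta_t}$ is a squarefree monomial ideal — so that $\operatorname{Tor}_i^S(k[\Delta_t],k)_\alpha=0$ unless $\alpha=\mathbf 1_W$ for some $W\subseteq[n]$, a standard feature of minimal free resolutions of squarefree monomial ideals \cite{bruns1998cohen, MillerSturmfels2005} — the sum collapses to $\operatorname{Tor}_i^S(k[\Delta_t],k)_j=\bigoplus_{|W|=j}\operatorname{Tor}_i^S(k[\Delta_t],k)_{\mathbf 1_W}$.

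Next I would observe that the structure morphism $k[\Delta_b]\twoheadrightarrow k[\Delta_a]$, being the quotient by the monomial ideal $I_{\Delta_a}/I_{\Delta_b}$, is $\mathbb{Z}^n$–graded; hence, by functoriality of $\operatorname{Tor}$ (the same functoriality used to construct the persistent maps in Definition~\ref{def:persistent-betti}), the comparison map $\operatorname{Tor}_i^S(k[\Delta_b],k)_j\to\operatorname{Tor}_i^S(k[\Delta_a],k)_j$ is the direct sum, over all $W\subseteq[n]$ with $|W|=j$, of the multigraded comparison maps $\operatorname{Tor}_i^S(k[\Delta_b],k)_{\mathbf 1_W}\to\operatorname{Tor}_i^S(k[\Delta_a],k)_{\mathbf 1_W}$. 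Taking ranks and using additivity of rank over finite direct sums yields $\beta^{a,b}_{i,j}(k[\Delta_\bullet])=\sum_{|W|=j}\beta^{a,b}_{i,\mathbf 1_W}(k[\Delta_\bullet])$. Substituting the value of each summand furnished by Theorem~\ref{thm:persistent-hochster}, and noting that $|W|=j$ turns the cohomological degree $|W|-i-1$ into $j-i-1$, gives the asserted identity.

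I do not expect a genuine obstacle: once the multigraded theorem is in hand, the argument is bookkeeping. The two points worth stating explicitly are (i) the squarefree concentration of $\operatorname{Tor}$, without which the sum over multidegrees $|\alpha|=j$ would not reduce to a sum over subsets $W$; and (ii) the compatibility of the multidegree decomposition with the comparison map, i.e.\ that the surjection $k[\Delta_b]\to k[\Delta_a]$ does not mix multidegrees — immediate, since it is a multigraded map, so the induced map on $\operatorname{Tor}$ is multigraded and therefore block diagonal for the multigrading.
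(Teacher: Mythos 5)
Your proposal is correct and follows the same route the paper takes (the paper dispatches this corollary with the one-line remark that one sums the multigraded statement over all $W$ with $|W|=j$). You simply make explicit the two pieces of bookkeeping the paper leaves implicit — the squarefree concentration of $\operatorname{Tor}$ in multidegrees $\mathbf 1_W$ and the block-diagonality of the multigraded comparison map — both of which are exactly right.
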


\noindent
Over a field, cohomology is the linear dual of homology. We will use the following
rank duality to freely switch between cohomological and homological formulations of
the persistent ranks.

\begin{lemma}[Cohomology--homology rank duality over a field]
	Let $k$ be a field, $a\le b$, $W\subseteq[n]$, and put $q:=|W|-i-1$. Then
	\[
	\mathrm{rank}\!\Bigl(
	\widetilde H^{\,q}\!\bigl((\Delta_b)_W;k\bigr)\to
	\widetilde H^{\,q}\!\bigl((\Delta_a)_W;k\bigr)
	\Bigr)
	=
	\mathrm{rank}\!\Bigl(
	\widetilde H_{\,q}\!\bigl((\Delta_a)_W;k\bigr)\to
	\widetilde H_{\,q}\!\bigl((\Delta_b)_W;k\bigr)
	\Bigr).
	\]
\end{lemma}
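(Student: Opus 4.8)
The plan is to reduce the statement to a purely linear-algebraic fact about a map of finite-dimensional chain complexes over a field, together with the universal coefficient theorem. Concretely, fix $W\subseteq[n]$ and the inclusion $\iota\colon (\Delta_a)_W\hookrightarrow(\Delta_b)_W$. This induces a chain map $\iota_\#\colon C_\bullet((\Delta_a)_W;k)\to C_\bullet((\Delta_b)_W;k)$ on reduced simplicial chains, and dually a cochain map $\iota^\#\colon C^\bullet((\Delta_b)_W;k)\to C^\bullet((\Delta_a)_W;k)$. Since $k$ is a field, the cochain complex is literally the $k$-linear dual of the chain complex, and $\iota^\#=(\iota_\#)^{*}$ degreewise. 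Passing to (co)homology, the reduced cohomology $\widetilde H^{q}(\,\cdot\,;k)$ is canonically isomorphic to the dual $k$-vector space $\widetilde H_{q}(\,\cdot\,;k)^{*}$, naturally in the space; under these identifications the restriction map $\iota^{*}$ on $\widetilde H^{q}$ is identified with the dual $(\iota_{*})^{*}$ of the map $\iota_{*}\colon\widetilde H_{q}((\Delta_a)_W;k)\to\widetilde H_{q}((\Delta_b)_W;k)$.

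Granting that identification, the statement becomes the elementary fact that a linear map of finite-dimensional vector spaces and its transpose have the same rank: $\operatorname{rank}(f)=\operatorname{rank}(f^{*})$. Thus I would structure the proof as three short steps. First, set up the chain/cochain duality over a field and note $\iota^\#=(\iota_\#)^{*}$; this is where one must be a little careful to use the \emph{reduced} chain complexes consistently (augmentation in degree $-1$) so that $\widetilde H^{q}$ pairs with $\widetilde H_{q}$ in the same degree. Second, invoke the universal coefficient theorem over a field: because $k$ is a field, $\operatorname{Ext}^1$ terms vanish and one gets a natural isomorphism $\widetilde H^{q}(X;k)\cong\operatorname{Hom}_k(\widetilde H_q(X;k),k)$, compatible with maps induced by simplicial maps — so the square relating $\iota^{*}$ on cohomology and $(\iota_{*})^{*}$ on the dual of homology commutes. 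Third, apply $\operatorname{rank}(g)=\operatorname{rank}(g^{*})$ with $g=\iota_{*}$ on $\widetilde H_q$ to conclude.

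The only genuine subtlety — the "main obstacle," though it is mild — is the naturality of the universal coefficient isomorphism with respect to the induced maps, i.e.\ checking that
\[
\begin{tikzcd}[column sep=large]
\widetilde H^{q}((\Delta_b)_W;k) \arrow[r,"\iota^{*}"] \arrow[d,"\cong"'] & \widetilde H^{q}((\Delta_a)_W;k) \arrow[d,"\cong"] \\
\widetilde H_{q}((\Delta_b)_W;k)^{*} \arrow[r,"(\iota_{*})^{*}"'] & \widetilde H_{q}((\Delta_a)_W;k)^{*}
\end{tikzcd}
\]
commutes; this follows from the functoriality of the evaluation pairing $\langle\,\cdot\,,\,\cdot\,\rangle\colon \widetilde H^{q}(X)\times\widetilde H_q(X)\to k$ under chain maps, namely $\langle\iota^{*}\varphi,\,c\rangle=\langle\varphi,\,\iota_{*}c\rangle$, which is immediate from $\iota^\#=(\iota_\#)^{*}$ at the cochain level. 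Everything else is standard linear algebra over $k$. Note also that, combined with Theorem~\ref{thm:persistent-hochster}, this lemma lets us rewrite the persistent graded Betti numbers $\beta^{\,a,b}_{i,\mathbf 1_W}(k[\Delta_\bullet])$ entirely in terms of the covariant maps on reduced homology $\widetilde H_{|W|-i-1}((\Delta_a)_W;k)\to\widetilde H_{|W|-i-1}((\Delta_b)_W;k)$ induced by the inclusions, which is the form most convenient for the subsequent comparison with persistent homology.
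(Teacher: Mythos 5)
Your proposal is correct and follows essentially the same route as the paper's proof: dualize the chain complex over the field, identify $\widetilde H^{q}$ with $\operatorname{Hom}_k(\widetilde H_q,k)$ naturally, and invoke $\operatorname{rank}(f)=\operatorname{rank}(f^{*})$. Your explicit check of naturality via the evaluation pairing just spells out what the paper asserts with the phrase ``naturally in $\Gamma$.''
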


\begin{proof}
	Over a field $k$, the simplicial cochain complex is the $k$–linear dual of the chain complex:
	$C^q(\Gamma;k)=\mathrm{Hom}_k(C_q(\Gamma;k),k)$ with coboundary
	$\delta^q=(\partial_{q+1})^\ast$. Hence
	$\widetilde H^{\,q}(\Gamma;k)\cong \mathrm{Hom}_k(\widetilde H_{\,q}(\Gamma;k),k)$
	naturally in $\Gamma$. Apply this to the inclusion
	$(\Delta_a)_W\hookrightarrow(\Delta_b)_W$; the induced map on cohomology is the dual of
	the induced map on homology. For linear maps of finite-dimensional $k$–vector spaces,
	$\mathrm{rank}(f)=\mathrm{rank}(f^\ast)$. The claim follows.
\end{proof}

Therefore, the definition of the persistent graded Betti numbers in the PSRT framework,
where they were introduced via homological groups, is natural. The lemma above shows that
using homology instead of cohomology yields equivalent and well-defined invariants over a field.

\noindent
The familiar shift $\beta_i(S/I)=\beta_{i-1}(I)$ persists functorially: 
the connecting isomorphisms in the long exact $\operatorname{Tor}$ sequence are natural, 
hence commute with the morphisms induced by $I\to I'$ and $S/I\to S/I'$,
intertwining all structure maps in a filtration. 
Setting $a=b$ recovers the classical (pointwise) shift.

\begin{proposition}[Betti shift between an ideal and its quotient; naturality]
	\label{prop:betti-shift-nat}
	Let \(S\) be a \(\mathbb{Z}\)- or \(\mathbb{Z}^n\)-graded Noetherian ring and \(I\subset S\) a homogeneous ideal.
	Fix a field \(k\) and view \(k\) as a graded \(S\)-module concentrated in degree \(0\).
	Then:
	
	\begin{enumerate}
		\item[\textup{(a)}] \textbf{Betti shift (pointwise).}
		For all \(i\ge 1\) and all degrees \(j\) (resp.\ multidegrees \(\alpha\)),
		\[
		\beta_{i,j}(S/I)=\beta_{i-1,j}(I),\qquad
		\bigl(\text{resp. }\ \beta_{i,\alpha}(S/I)=\beta_{i-1,\alpha}(I)\bigr),
		\]
		and \(\beta_{0,0}(S/I)=1\), \(\beta_{0,j}(S/I)=0\) for \(j\neq 0\)
		{(resp.\ \(\beta_{0,\alpha}(S/I)=\mathbf{1}_{\{\alpha=\mathbf{0}\}}\))}.
		
		\item[\textup{(b)}] \textbf{Naturality with respect to maps of short exact sequences.}
		Let \(\phi:I\to I'\) be a graded \(S\)-linear map and let \(\overline{\phi}:S/I\to S/I'\) be the induced map.
		Then for every \(i\ge 1\) the isomorphisms in \textup{(a)} fit into commutative squares, degreewise:
		\[
		\begin{array}{ccc}
			\operatorname{Tor}^{S}_i(S/I,k)_\bullet
			& \xrightarrow{\ \overline{\phi}_\ast\ } &
			\operatorname{Tor}^{S}_i(S/I',k)_\bullet\\[2pt]
			\big\downarrow\scriptstyle{\cong} & & \big\downarrow\scriptstyle{\cong}\\[2pt]
			\operatorname{Tor}^{S}_{i-1}(I,k)_\bullet
			& \xrightarrow{\ \phi_\ast\ } &
			\operatorname{Tor}^{S}_{i-1}(I',k)_\bullet
		\end{array}
		\]
		where \(\bullet\) denotes either a single degree \(j\) or a multidegree \(\alpha\).
		
		\item[\textup{(c)}] \textbf{Persistent families.}
		If \((I_t)_{t\in T}\) is a graded family indexed by a poset \(T\), and for
		\(s\le t\) we have a morphism \(\phi_{s\to t}:I_s\to I_t\) with induced
		\(\overline{\phi}_{s\to t}:S/I_s\to S/I_t\), then the identifications in \textup{(a)} are
		compatible with the structure maps in the sense of \textup{(b)} for every \(s\le t\).
	\end{enumerate}
\end{proposition}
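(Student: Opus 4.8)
\emph{Strategy.} The plan is to obtain all three parts from a single source: the long exact sequence of $\operatorname{Tor}^S_\bullet(-,k)$ attached to the tautological short exact sequence
\[
0\longrightarrow I\longrightarrow S\longrightarrow S/I\longrightarrow 0,
\]
together with the fact that this long exact sequence, \emph{including its connecting homomorphisms}, is natural with respect to morphisms of short exact sequences. Part (a) will be the classical ``$\beta_i(S/I)=\beta_{i-1}(I)$'' shift already recorded in Section~2 in the flat case; part (b) will be the statement that the connecting maps form a natural transformation; and part (c) will follow by applying (b) to every comparison $s\le t$ of the filtration.

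\emph{Part (a).} I would apply the graded (resp.\ $\mathbb Z^n$-graded) functors $\operatorname{Tor}^S_i(-,k)$ to the displayed sequence; all maps, including the connecting maps $\partial_i$, are homogeneous of degree $0$, so it suffices to work in each graded/multigraded piece. Since $S$ is free over itself, $\operatorname{Tor}^S_i(S,k)=0$ for $i\ge1$ while $\operatorname{Tor}^S_0(S,k)=k$ is concentrated in degree $\mathbf 0$. For $i\ge2$ the two neighbours of $\operatorname{Tor}^S_i(S/I,k)_\bullet$ in the long exact sequence vanish, so $\partial_i\colon\operatorname{Tor}^S_i(S/I,k)_\bullet\xrightarrow{\ \cong\ }\operatorname{Tor}^S_{i-1}(I,k)_\bullet$, and taking $\dim_k$ gives $\beta_{i,\bullet}(S/I)=\beta_{i-1,\bullet}(I)$. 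For $i=1$ the relevant segment is
\[
0\to\operatorname{Tor}^S_1(S/I,k)_\bullet\xrightarrow{\ \partial_1\ }\operatorname{Tor}^S_0(I,k)_\bullet\to\operatorname{Tor}^S_0(S,k)_\bullet\to\operatorname{Tor}^S_0(S/I,k)_\bullet\to0,
\]
and the third arrow $I/\mathfrak mI\to k$ is induced by $I\hookrightarrow S\twoheadrightarrow k$, hence is zero because $I\subseteq\mathfrak m:=\ker(S\to k)$ (automatic for a proper homogeneous ideal when $S$ is connected graded); thus $\partial_1$ is again an isomorphism, giving $\beta_{1,\bullet}(S/I)=\beta_{0,\bullet}(I)$. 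Finally $\operatorname{Tor}^S_0(S/I,k)=(S/I)\otimes_S k=S/(\mathfrak m+I)=S/\mathfrak m=k$ is concentrated in degree (resp.\ multidegree) $\mathbf 0$, which is the asserted value of $\beta_{0,\bullet}(S/I)$.

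\emph{Part (b).} The hypothesis that $\overline\phi\colon S/I\to S/I'$ is induced by $\phi\colon I\to I'$ amounts to saying that $(\phi,\psi,\overline\phi)$ is a morphism of short exact sequences
\[
\begin{tikzcd}
0\arrow[r]&I\arrow[r]\arrow[d,"\phi"]&S\arrow[r]\arrow[d,"\psi"]&S/I\arrow[r]\arrow[d,"\overline\phi"]&0\\
0\arrow[r]&I'\arrow[r]&S\arrow[r]&S/I'\arrow[r]&0
\end{tikzcd}
\]
for some graded $S$-endomorphism $\psi$ of $S$ with $\psi(I)\subseteq I'$ (in the filtrations of interest $\psi=\operatorname{id}_S$, $\phi$ is the inclusion $I\subseteq I'$, and $\overline\phi$ is the canonical projection). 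Naturality of the long exact $\operatorname{Tor}^S_\bullet(-,k)$ sequence on the category of short exact sequences then furnishes, for every $i$, commuting squares relating $\overline\phi_\ast$, $\psi_\ast$ and $\phi_\ast$; in particular the connecting square
\[
\partial_i^{\,I'}\circ\overline\phi_\ast\;=\;\phi_\ast\circ\partial_i^{\,I}
\]
commutes. For $i\ge1$ the ambient groups $\operatorname{Tor}^S_i(S,k)$ (and $\operatorname{Tor}^S_{i-1}(S,k)$ when $i\ge2$) vanish, so $\psi_\ast$ drops out and, by part (a), $\partial_i^{\,I}$ and $\partial_i^{\,I'}$ are the Betti-shift isomorphisms; the displayed identity is then exactly the commuting square of (b), valid in each single degree and multidegree.

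\emph{Part (c) and the main difficulty.} Part (c) follows by applying (b) with $\phi=\phi_{s\to t}$ and $\overline\phi=\overline\phi_{s\to t}$ for each $s\le t$; since the connecting maps $\{\partial_i\}$ form a \emph{single} natural transformation, they are compatible with composition of morphisms of short exact sequences, so the family intertwines the persistence-module structures of $t\mapsto\operatorname{Tor}^S_i(S/I_t,k)$ and $t\mapsto\operatorname{Tor}^S_{i-1}(I_t,k)$, whence $\beta^{a,b}_{i,\bullet}(S/I_\bullet)=\beta^{a,b}_{i-1,\bullet}(I_\bullet)$ for all $i\ge1$ and $a\le b$. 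I expect the proof to be essentially bookkeeping, with only two points requiring genuine attention: (i) the $i=1$ case of (a), whose isomorphism is \emph{not} formal from the vanishing of $\operatorname{Tor}_{\ge1}(S,k)$ alone but uses $I\subseteq\mathfrak m$; and (ii) the appeal to naturality of the connecting homomorphism with respect to morphisms of short exact sequences --- standard $\delta$-functor homological algebra that should be cited rather than re-derived --- together with the routine check that every map involved is homogeneous of degree $0$ so that all conclusions descend to the graded and multigraded pieces.
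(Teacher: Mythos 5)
Your proposal is correct and follows essentially the same route as the paper: the long exact sequence of $\operatorname{Tor}^S_\bullet(-,k)$ for $0\to I\to S\to S/I\to 0$ gives the shift in (a), naturality of the connecting homomorphisms (standard $\delta$-functor homological algebra, with $\psi=\operatorname{id}_S$ in the cases of interest) gives (b), and (c) is (b) applied to each structure map. If anything, you are more careful than the paper at the one non-formal step, namely the $i=1$ isomorphism, where you correctly observe that $\operatorname{Tor}_0(I,k)\to\operatorname{Tor}_0(S,k)$ vanishes because $I\subseteq\mathfrak m$.
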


\begin{proof}
	\textup{(a)} is a common result, is already discussed in the previous sections.
	
	For \textup{(b)}, let \(\phi:I\to I'\) be graded \(S\)-linear and consider the morphism of short exact sequences
	\[
	\begin{array}{ccccccccc}
		0 &\to& I &\xrightarrow{\ \iota\ }& S &\xrightarrow{\ \pi\ }& S/I &\to& 0 \\
		&   & \big\downarrow\scriptstyle{\phi} &  & \big\Vert & & \big\downarrow\scriptstyle{\overline{\phi}} & & \\
		0 &\to& I'&\xrightarrow{\ \iota'\ }& S &\xrightarrow{\ \pi'\ }& S/I'&\to& 0 \ .
	\end{array}
	\]
	Functoriality of \(\operatorname{Tor}\) provides a morphism of long exact sequences, and the connecting isomorphisms
	\(\delta_i: \operatorname{Tor}^{S}_i(S/I,k)_\bullet \xrightarrow{\ \cong\ } \operatorname{Tor}^{S}_{i-1}(I,k)_\bullet\)
	are {natural} \cite{Rotman2009_IntroHomologicalAlgebra}. Concretely, for all \(i\ge 1\) and in every degree \(\bullet\),
	\[
	\begin{array}{ccc}
		\operatorname{Tor}^{S}_i(S/I,k)_\bullet
		& \xrightarrow{\ \overline{\phi}_\ast\ } &
		\operatorname{Tor}^{S}_i(S/I',k)_\bullet\\[2pt]
		\big\downarrow\scriptstyle{\delta_i}^{\cong} & & \big\downarrow\scriptstyle{\delta_i}^{\cong}\\[2pt]
		\operatorname{Tor}^{S}_{i-1}(I,k)_\bullet
		& \xrightarrow{\ \phi_\ast\ } &
		\operatorname{Tor}^{S}_{i-1}(I',k)_\bullet
	\end{array}
	\]
	commutes. Taking dimensions yields the stated compatibility for Betti numbers. This proves \textup{(b)}.
	
	For \textup{(c)}, apply \textup{(b)} to each structure map \(\phi_{s\to t}:I_s\to I_t\) in the family
	and the induced \(\overline{\phi}_{s\to t}:S/I_s\to S/I_t\). The preceding commutative squares
	show that the identifications from \textup{(a)} define an isomorphism of persistence modules
	\[
	\bigl(\operatorname{Tor}^{S}_i(S/I_t,k)_\bullet\bigr)_{t\in T}
	\ \cong\
	\bigl(\operatorname{Tor}^{S}_{i-1}(I_t,k)_\bullet\bigr)_{t\in T}
	\qquad (i\ge 1),
	\]
	i.e., they intertwine all structure maps. The degree–\(0\) case for \(i=0\) is as in \textup{(a)}.
\end{proof}

\noindent
Finally, if each level $I_t$ admits a Betti splitting compatible with the inclusions
across $t$, then the persistent Betti numbers split additively. In the degenerate case
$a=b$ this specializes to the usual Betti splitting formula.

\begin{proposition}[Persistent Betti splitting]\label{prop:persistent-betti-splitting}
	Let $\{I_t\}_{t\in T}$ be a filtration of monomial ideals in $S=k[x_1,\dots,x_n]$ with
	$T$ totally ordered and $a\le b$.
	Assume that for each $t$ we have a decomposition
	\[
	I_t \;=\; J_t + K_t
	\]
	such that
		$G(I_t)=G(J_t)\,\dot\cup\,G(K_t)$ (disjoint union of minimal generators) and
		\[
		\beta_{i,j}(I_t) \;=\; \beta_{i,j}(J_t)+\beta_{i,j}(K_t)+\beta_{i-1,j}(J_t\cap K_t)
		\quad \text{for all } i,j.
		\]
		Equivalently, in the short exact sequence
		\[
		0\to J_t\cap K_t \longrightarrow J_t\oplus K_t \longrightarrow I_t \to 0,
		\]
		the induced maps
		$\mathrm{Tor}_i^S(k,J_t\cap K_t)_j \to \mathrm{Tor}_i^S(k,J_t)_j$ and
		$\mathrm{Tor}_i^S(k,J_t\cap K_t)_j \to \mathrm{Tor}_i^S(k,K_t)_j$ are $0$ for all $i,j$.
		
	Then the persistent graded Betti numbers satisfy:
	\[
	\beta^{\,a, b}_{i,j}(I_\bullet)
	\;\geq \;
	\beta^{\,a, b}_{i,j}(J_\bullet)
	\;+\;
	\beta^{\,a, b}_{i,j}(K_\bullet)
	\;+\;
	\beta^{\,a, b}_{i-1,j}\bigl((J\cap K)_\bullet\bigr)
	\qquad\text{for all } i,j,
	\]
	and likewise in the multigraded form with $j$ replaced by $W\subseteq[n]$.
\end{proposition}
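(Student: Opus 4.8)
The strategy is to use the Betti-splitting hypothesis to cut, at each level $t$, the long exact $\operatorname{Tor}$-sequence of
\[
0 \longrightarrow J_t\cap K_t \longrightarrow J_t\oplus K_t \longrightarrow I_t \longrightarrow 0
\]
into short exact pieces, to stack the pieces for $t=a$ and $t=b$ into a commutative ladder, and then to invoke an elementary rank inequality for such ladders. \textbf{Step 1 (collapse the long exact sequence).} Fix $t$. As recorded in the hypothesis, the Betti-splitting condition says precisely that the map $\operatorname{Tor}_i^S(k,J_t\cap K_t)_j \to \operatorname{Tor}_i^S(k,J_t)_j\oplus\operatorname{Tor}_i^S(k,K_t)_j$ vanishes for all $i,j$. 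Hence the long exact $\operatorname{Tor}(k,-)_j$-sequence of the displayed sequence breaks into short exact sequences
\[
0 \to \operatorname{Tor}_i^S(k,J_t)_j\oplus\operatorname{Tor}_i^S(k,K_t)_j \to \operatorname{Tor}_i^S(k,I_t)_j \to \operatorname{Tor}_{i-1}^S(k,J_t\cap K_t)_j \to 0
\]
for every $i\ge 0$, with the convention $\operatorname{Tor}_{-1}=0$; the multigraded version is identical with $j$ replaced by $\mathbf 1_W$.

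\textbf{Step 2 (the ladder).} For $a\le b$ the structure maps of the filtration restrict to morphisms $J_a\to J_b$, $K_a\to K_b$, $(J\cap K)_a\to(J\cap K)_b$ compatible with $I_a\to I_b$; this functoriality of the splitting data in $t$ is part of the hypothesis, and it is automatic in the motivating examples — e.g. vertex splittings of edge ideals along a graph filtration, where every structure map is an inclusion. By functoriality of $\operatorname{Tor}$ in the first argument and naturality of the connecting homomorphism — exactly as in Proposition~\ref{prop:betti-shift-nat}(b) — the short exact sequences of Step 1 at $t=a$ and $t=b$ fit into a commutative diagram with exact rows
\[
\begin{tikzcd}[column sep=small]
0 \arrow[r] & A_a \arrow[r]\arrow[d,"f"'] & B_a \arrow[r]\arrow[d,"g"'] & C_a \arrow[r]\arrow[d,"h"'] & 0 \\
0 \arrow[r] & A_b \arrow[r] & B_b \arrow[r] & C_b \arrow[r] & 0
\end{tikzcd}
\]
where $A_t=\operatorname{Tor}_i^S(k,J_t)_j\oplus\operatorname{Tor}_i^S(k,K_t)_j$, $B_t=\operatorname{Tor}_i^S(k,I_t)_j$, and $C_t=\operatorname{Tor}_{i-1}^S(k,J_t\cap K_t)_j$. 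By Definition~\ref{def:persistent-betti} and additivity of rank on direct sums, $\operatorname{rank} f=\beta^{a,b}_{i,j}(J_\bullet)+\beta^{a,b}_{i,j}(K_\bullet)$, $\operatorname{rank} g=\beta^{a,b}_{i,j}(I_\bullet)$, and $\operatorname{rank} h=\beta^{a,b}_{i-1,j}((J\cap K)_\bullet)$.

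\textbf{Step 3 (rank inequality).} It suffices to prove: for a commutative diagram of finite-dimensional $k$-vector spaces as above with exact rows, $\operatorname{rank}(g)\ge\operatorname{rank}(f)+\operatorname{rank}(h)$. Identify $A_b$ with its image in $B_b$, so that $C_b=B_b/A_b$ and $B_b\twoheadrightarrow C_b$ is the quotient map. Commutativity of the left square gives $g(A_a)=f(A_a)\subseteq A_b$, hence $g(B_a)\cap A_b\supseteq f(A_a)$. Commutativity of the right square gives that the composite $B_a\xrightarrow{g}B_b\twoheadrightarrow C_b$ equals $B_a\twoheadrightarrow C_a\xrightarrow{h}C_b$, so the image of $g(B_a)$ in $C_b$ equals $h(C_a)$; thus the restriction of $B_b\twoheadrightarrow C_b$ to $g(B_a)$ has kernel $g(B_a)\cap A_b$ and image $h(C_a)$. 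Combining $\dim g(B_a)=\dim\bigl(g(B_a)\cap A_b\bigr)+\dim\bigl(g(B_a)/(g(B_a)\cap A_b)\bigr)$ with these two facts,
\[
\operatorname{rank}(g)=\dim g(B_a)\ \ge\ \dim f(A_a)+\dim h(C_a)=\operatorname{rank}(f)+\operatorname{rank}(h).
\]
Substituting the identifications of Step 2 yields the asserted inequality, and the same argument with $j$ replaced by $\mathbf 1_W$ gives the multigraded form.

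\textbf{Main obstacle.} The only genuine content is the rank inequality of Step 3, and in particular the phenomenon that persistence degrades the pointwise Betti-splitting equality to an inequality: each row of the ladder splits, but the ladder need not split compatibly, so $g(B_a)\cap A_b$ can be strictly larger than $f(A_a)$ — there may be syzygy classes of $I_a$ whose images in $\operatorname{Tor}_i^S(k,I_b)_j$ land in the summand $A_b$ without lifting through $f$. A secondary technical point is the compatibility invoked in Step 2: one must ensure the splitting data $(J_t,K_t)$ is functorial in $t$ so that the ladder actually commutes, which I would either incorporate explicitly into the hypotheses or verify directly for the vertex-splitting filtrations of edge ideals that motivate the statement.
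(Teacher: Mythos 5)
Your proof follows the same route as the paper's: the splitting hypothesis collapses the long exact $\operatorname{Tor}$ sequence into short exact sequences at each level, these assemble into a commutative ladder between levels $a$ and $b$, and the conclusion is the elementary rank inequality $\operatorname{rank}(g)\ge\operatorname{rank}(f)+\operatorname{rank}(h)$ for a map of short exact sequences of finite-dimensional vector spaces. The only differences are in your favor: you actually prove that rank inequality (the paper's sketch merely invokes it), and you explicitly flag that the splitting data $(J_t,K_t)$ must be functorial in $t$ for the ladder to commute---a compatibility the paper assumes implicitly.
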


\begin{proof}[Sketch]
	Fix $i,j$. For each $t$ the short exact sequence
	\[
	0\to J_t\cap K_t \longrightarrow J_t\oplus K_t \longrightarrow I_t \to 0
	\]
	induces the long exact sequence in $\mathrm{Tor}^S(-,k)$; by (1) the maps
	$\mathrm{Tor}_i(k,J_t\cap K_t)_j\to \mathrm{Tor}_i(k,J_t)_j$ and
	$\mathrm{Tor}_i(k,J_t\cap K_t)_j\to \mathrm{Tor}_i(k,K_t)_j$ are zero, so the long exact
	sequence breaks degreewise into short exact sequences
	\begin{equation}\label{eq:ses}
		0\;\longrightarrow\;
		X_t \xrightarrow{\ \iota_t\ } Y_t \xrightarrow{\ \partial_t\ } Z_t \;\longrightarrow\; 0,
	\end{equation}
	where
	\[
	X_t:=\mathrm{Tor}_i(k,J_t)_j\oplus \mathrm{Tor}_i(k,K_t)_j,\quad
	Y_t:=\mathrm{Tor}_i(k,I_t)_j,\quad
	Z_t:=\mathrm{Tor}_{i-1}(k,J_t\cap K_t)_j.
	\]
	By (2) and naturality of $\mathrm{Tor}$, for $a\le b$ these assemble into a commutative diagram
	of short exact sequences:
	\[
	\begin{tikzcd}
		0 \ar[r] & X_a \ar[r, "\iota_a"] \ar[d, "x"] &
		Y_a \ar[r, "\partial_a"] \ar[d, "y"] &
		Z_a \ar[r] \ar[d, "z"] & 0\\
		0 \ar[r] & X_b \ar[r, "\iota_b"] &
		Y_b \ar[r, "\partial_b"] &
		Z_b \ar[r] & 0
	\end{tikzcd}
	\]
	(all maps $k$–linear).
	
	Now apply the elementary linear–algebra lemma for maps between short exact sequences of
	vector spaces: in any commutative diagram as above,
	\[
	\mathrm{rank}(y)\ \ge\ \mathrm{rank}(x)\ +\ \mathrm{rank}(z),
	\]
	with equality if and only if
	\[
	\operatorname{im}(y)\cap \operatorname{im}(\iota_b)\;=\;\iota_b\bigl(\operatorname{im}(x)\bigr)
	\quad\text{(equivalently, $0\to \operatorname{im}x\to \operatorname{im}y\to \operatorname{im}z\to 0$ is exact).}
	\]
	By definition of persistent Betti ranks,
	\[
	\beta^{\,a, b}_{i,j}(I_\bullet)=\mathrm{rank}(y),\quad
	\beta^{\,a, b}_{i,j}(J_\bullet)=\mathrm{rank}\!\bigl(\mathrm{Tor}_i(k,J_a)_j\to \mathrm{Tor}_i(k,J_b)_j\bigr),
	\]
	\[
	\beta^{\,a, b}_{i,j}(K_\bullet)=\mathrm{rank}\!\bigl(\mathrm{Tor}_i(k,K_a)_j\to \mathrm{Tor}_i(k,K_b)_j\bigr),\quad
	\beta^{\,a, b}_{i-1,j}\bigl((J\cap K)_\bullet\bigr)=\mathrm{rank}(z).
	\]
	Since $\mathrm{rank}(x)$ is the sum of the $J$ and $K$ ranks (on a direct sum), the lemma yields
	\[
	\beta^{\,a, b}_{i,j}(I_\bullet)
	\ \ge\
	\beta^{\,a, b}_{i,j}(J_\bullet)
	\ +\
	\beta^{\,a, b}_{i,j}(K_\bullet)
	\ +\
	\beta^{\,a, b}_{i-1,j}\bigl((J\cap K)_\bullet\bigr).
	\]
	The multigraded version follows by applying the same argument in each multidegree.

	If, in addition, the middle images are exact (e.g. when the short exact sequences
	\eqref{eq:ses} split functorially across $t$ via a mapping-cone construction compatible with the
	filtration), then equality holds in the display above.
\end{proof}

All results in this section are stated for {general} filtrations of graded monomial ideals
$(I_t)_{t\in T}$ and of simplicial complexes $(\Delta_t)_{t\in T}$. In particular, the persistent
Hochster formula and the functorial Betti shift apply degreewise to any such filtration, and
specialize to the usual (non-persistent) identities when $a=b$.

In applications below we restrict to filtrations coming from graphs. There are two standard
encodings:
\begin{itemize}
	\item {Edge ideals:} for a filtration of graphs $(G_t)_{t\in T}$ on a fixed vertex set $[n]$,
	take $I_t:=I(G_t)\subset S=k[x_1,\dots,x_n]$. Persistent Betti numbers
	$\beta^{\,a\to b}_{i,\bullet}(I_\bullet)$ then measure the ranks of the maps on
	$\operatorname{Tor}$ induced by $I(G_a)\hookrightarrow I(G_b)$.
	\item {Stanley–Reisner rings of independence complexes:} set $\Delta_t:=\operatorname{Ind}(G_t)$
	and $k[\Delta_t]=S/I_{\Delta_t}$. The persistent Hochster formula identifies
	$\beta^{\,a, b}_{i,W}\!\bigl(k[\Delta_\bullet]\bigr)$ with the rank of the cohomology map
	$\widetilde H^{\,|W|-i-1}\!\bigl(\operatorname{Ind}(G_b[W]);k\bigr)\to
	\widetilde H^{\,|W|-i-1}\!\bigl(\operatorname{Ind}(G_a[W]);k\bigr)$,
	where $G_t[W]$ denotes the induced subgraph on $W$.
\end{itemize}
Thus every persistent statement above immediately yields its graph-theoretic counterpart;
when $a=b$ we recover the classical formulas for $I(G)$ and for $k[\operatorname{Ind}(G)]$.

\section{Persistent minimal primes for edge ideals}

Let $S=k[x_1,\dots,x_n]$ be the standard graded polynomial ring and let
$\{G_t\}_{t\in\mathbb{R}}$ be an increasing filtration of simple graphs on the fixed
vertex set $V=\{1,\dots,n\}$, i.e.
\[
E_s \subseteq E_t \qquad \text{whenever } s\le t .
\]
For each $t$, define the {edge ideal}
\[
I_t \;:=\; I(G_t) \;=\; \big\langle\, x_ix_j \ \bigm|\ \{i,j\}\in E_t \,\big\rangle \;\subseteq\; S .
\]
Because the graph filtration is increasing, the edge ideals form an increasing chain
\[
I_s \;\subseteq\; I_t \qquad \text{for all } s\le t .
\]

It is classical that the minimal primes of $I(G)$ are in bijection with the minimal
vertex covers of $G$: if $C\subseteq V$ is a minimal vertex cover of $G$, then
\[
P_C \;:=\; (x_i \mid i\in C) \;\subseteq\; S
\]
is a minimal prime of $I(G)$, and every minimal prime has this form.  For each $t$
we therefore set
\[
\mathcal{P}^t \;:=\; \operatorname{MinAss}(S/I_t)
\;=\;
\bigl\{\, P_C \ \bigm|\ C \text{ a minimal vertex cover of } G_t \,\bigr\}.
\]
To record the size profile of covers, we stratify by cardinality:
\[
\mathcal{P}_k^t \;:=\; \bigl\{\, P_C\in \mathcal{P}^t \ \bigm|\ |C|=k \,\bigr\},
\qquad
\mathcal{P}^t \;=\; \bigsqcup_{k\ge 0}\, \mathcal{P}_k^t .
\]

Given $t<t'$, we say that a prime $P_C\in\mathcal{P}^t$ {persists} to level
$t'$ if it is still minimal over $I_{t'}$, i.e.\ $P_C\in\mathcal{P}^{t'}$.
Equivalently, the vertex cover $C$ is minimal for both $G_t$ and $G_{t'}$.
We collect the persistent components (optionally by size) as
\[
\mathcal{P}^{t,t'} \;:=\; \mathcal{P}^t \cap \mathcal{P}^{t'},
\qquad
\mathcal{P}^{t,t'}_k \;:=\; \mathcal{P}^t_k \cap \mathcal{P}^{t'}_k ,
\]
and define the {persistent prime numbers}
\[
\Pi^{t,t'} \;:=\; \bigl|\mathcal{P}^{t,t'}\bigr|,
\qquad
\Pi^{t,t'}_k \;:=\; \bigl|\mathcal{P}^{t,t'}_k\bigr| .
\]
A (vertex-cover) prime $P_C$ has {birth time}
\[
b(P_C) \;:=\; \inf\{\, t \mid P_C \in \mathcal{P}^t \,\}
\]
(with $b(P_C)=+\infty$ if the set is empty), and {death time}
\[
d(P_C) \;:=\; \inf\{\, t> b(P_C) \mid P_C \notin \mathcal{P}^t \,\}.
\]
Thus $P_C$ is present (i.e.\ minimal over $I_t$) exactly for $t\in [\,b(P_C),\,d(P_C))$.
Equivalently, the cover $C$ is minimal for $G_t$ precisely on this interval.

As $t$ increases, edges are added to $G_t$, so $I_t$ gains generators.  Consequently,
a vertex cover that is minimal at time $t$ may {die} at a later time $t'$ by
either ceasing to cover a newly added edge or remaining a cover but no longer being
minimal.  Conversely, new minimal covers (hence new minimal primes) may {be born}
at later times as the combinatorics tighten.  The invariants
$\{\Pi_k^{t,t'}\}_{k,t,t'}$ (and their unstratified version $\Pi^{t,t'}$) provide a
purely algebraic barcode for the persistence of minimal primes of the edge-ideal
filtration, paralleling persistent homology but expressed through minimal vertex covers.

\section{Hypergraphs, edge ideals, and facet ideals}\label{sec:hypergraphs}

The edge ideal construction extends naturally from graphs to general hypergraphs \cite{ha2008monomial}.
A (finite) hypergraph on the vertex set $V=\{1,\dots,n\}$ is a pair $H=(V,\mathcal{E})$,
where $\mathcal{E}\subseteq 2^V$ is a collection of nonempty subsets (the \emph{hyperedges}).
We say that $H$ is \emph{$d$--uniform} if every hyperedge has cardinality $d$.

Given a hypergraph $H=(V,\mathcal{E})$ and the polynomial ring $S=k[x_1,\dots,x_n]$, its
\emph{edge ideal} (or hyperedge ideal) is the squarefree monomial ideal
\[
I(H)\;:=\;\bigl( x_F \mid F\in\mathcal{E}\bigr)\ \subseteq\ S,
\qquad
x_F\;:=\;\prod_{i\in F}x_i.
\]
As in the graph case, $I(H)$ is squarefree and hence radical. Vertex covers and minimal
vertex covers are defined exactly as before: a subset $C\subseteq V$ is a vertex cover (or
transversal) of $H$ if it meets every hyperedge, and it is minimal if it contains no
smaller vertex cover. The same argument used for graphs shows that the minimal primes of
$I(H)$ are precisely the monomial primes
\[
\mathfrak p_C \;=\;\langle x_i : i\in C\rangle
\]
with $C$ a minimal vertex cover of $H$.

It is again convenient to pass to an independence complex. The \emph{independence complex}
of $H$ is the simplicial complex
\[
\mathrm{Ind}(H)\;:=\;\bigl\{\,\sigma\subseteq V \ \bigm|\ \text{$\sigma$ contains no hyperedge of $H$}\,\bigr\}.
\]
As in the graph case, the edge ideal of $H$ coincides with the Stanley--Reisner ideal of
$\mathrm{Ind}(H)$:
\[
I(H)\;=\;I_{\mathrm{Ind}(H)}.
\]
Equivalently, the minimal nonfaces of $\mathrm{Ind}(H)$ are exactly the hyperedges of $H$, and the
minimal monomial generators of $I_{\mathrm{Ind}(H)}$ are the monomials $x_F$ for $F\in\mathcal{E}$.

Consequently, all statements formulated above for edge ideals of graphs extend verbatim to
hypergraph edge ideals upon replacing $G$ by $H$ and $\mathrm{Ind}(G)$ by $\mathrm{Ind}(H)$. In particular,
Hochster's formula and its persistent version apply to $S/I(H)$ via the complexes
$\mathrm{Ind}(H)$, and the correspondence between minimal primes and minimal vertex covers carries
over unchanged.

More generally, all constructions in this paper are functorial in the underlying graded
monomial ideals and have been stated for arbitrary filtrations of such ideals and their
Stanley--Reisner rings. Thus if $\{H_t\}_{t\in T}$ is a filtration of hypergraphs on a fixed
vertex set and we set $I_t:=I(H_t)\subseteq S$, then:
\begin{itemize}
	\item The persistent graded Betti numbers $\beta^{a,b}_{i,\bullet}(I_\bullet)$ and
	$\beta^{a,b}_{i,\bullet}(S/I_\bullet)$ are defined exactly as in the graph case;
	\item The persistent Hochster formula identifies these Betti ranks with the ranks of
	the induced maps on (co)homology of the induced subcomplexes
	$\mathrm{Ind}(H_t)_W\subseteq \mathrm{Ind}(H_{t'})_W$;
	\item The functorial Betti shift between $I_t$ and $S/I_t$ and the persistent Betti
	splitting inequalities hold unchanged;
	\item The theory of persistent minimal primes specializes to persistent minimal
	vertex covers (minimal transversals) of the hypergraph filtration.
\end{itemize}
In particular, every result stated for filtrations of graphs extends directly to filtrations
of hypergraphs, and, more broadly, to any filtration of squarefree monomial ideals, including
facet ideals of simplicial complexes.

\section{Applications}

\subsection{Application: Genome Classification via Persistent Edge Ideals}

Genomic classification plays a central role in understanding evolutionary relationships, pathogen diversity, and functional similarities among organisms. With the rapid accumulation of viral and genomic sequences in public databases, there is a growing need for reliable  alignment-free approaches that capture intrinsic structural patterns within DNA sequences. Inspired by recent developments in algebraic topology and persistent homology\cite{hozumi2024revealing}, we propose a method based on {persistent edge ideals}, which encodes the positional and combinatorial structure of $k$-mer occurrences along genomic sequences.

We evaluate our approach on a curated mitochondrial genome dataset as curated in \cite{MillanArias2022DeLUCS}. The collection comprises {545} complete mitochondrial sequences
distributed across seven families with near-uniform cluster sizes between {70} and {80}
sequences each: Cyprinidae (80), Cobitidae (80), Balitoridae (75), Nemacheilidae (80),
Xenocyprididae (80), Acheilognathidae (70), and Gobionidae (80). Genome lengths are tightly
concentrated (minimum {16{,}061}\,bp, average {16{,}610}\,bp, maximum
{17{,}282}\,bp), providing a balanced benchmark at this taxonomic depth (see the table
entry for Dataset~5). Our comparative setup follows Hozumi and Wei~\cite{hozumi2024revealing}.

Let $\mathcal{A}$ denote the nucleotide alphabet and fix $k = 4$. For a DNA sequence $S = s_1 s_2 \cdots s_N \in \mathcal{A}^N$, we consider its collection of $k$-mers $\boldsymbol{x} \in \mathcal{A}^k$ and record the set of positions
\[
S^{\boldsymbol{x}} = \bigl\{\, i \in [1, N - k + 1] \ \big|\ s_i s_{i+1}\cdots s_{i+k-1} = \boldsymbol{x} \,\bigr\}.
\]
We associate to each $k$-mer $\boldsymbol{x}$ a graph $G^{\boldsymbol{x}}$ whose vertices correspond to occurrences in $S^{\boldsymbol{x}}$. Two vertices $i,j \in S^{\boldsymbol{x}}$ are connected whenever their positional distance satisfies
\[
|i - j| \leq r,
\]
where $r$ is a prescribed threshold controlling local connectivity. This construction produces a distance-based interaction graph that captures the spatial distribution of identical $k$-mers along the sequence. To probe structural changes across different interaction radii, we consider a filtration
\[
G^{\boldsymbol{x}}_{r_0} \subseteq G^{\boldsymbol{x}}_{r_1} \subseteq \cdots \subseteq G^{\boldsymbol{x}}_{r_T},
\]
and for each graph in the filtration define its {edge ideal}
\[
I(G^{\boldsymbol{x}}_r) = \big\langle\, x_i x_j \;\big|\; \{i,j\} \in E(G^{\boldsymbol{x}}_r) \,\big\rangle.
\]
The complement graph $\overline{G^{\boldsymbol{x}}_r}$ is also of particular interest: while $G^{\boldsymbol{x}}_r$ is typically chordal, its complement $\overline{G^{\boldsymbol{x}}_r}$ is generally non-chordal and admits a linear resolution. This property allows one to track persistent algebraic invariants of $\overline{G^{\boldsymbol{x}}_r}$ along the filtration. For each filtration step $r$, we compute the graded Betti numbers of the edge ideal $I(\overline{G^{\boldsymbol{x}}_r})$. Among these, the coefficient $\beta_{n-2,n}$---where $n = |S^{\boldsymbol{x}}|$ denotes the number of occurrences of $\boldsymbol{x}$---captures key algebraic information about the complement graph's linear resolution. The resulting curve
\[
r \longmapsto \beta_{n-2,n}\!\left(I(\overline{G^{\boldsymbol{x}}_r})\right)
\]
is referred to as the {persistent Betti curve} associated with $\boldsymbol{x}$. Concatenating these curves across all $k$-mers yields a comprehensive feature representation of the genome:
\[
\boldsymbol{v}^k_S = \bigl( \beta_{n-2,n}^{\boldsymbol{x}}(r_t) \bigr)_{\boldsymbol{x} \in \mathcal{A}^k,\, t=1,\dots,T}.
\]
Pairwise distances between sequences are then computed as Euclidean distances between their concatenated feature vectors, providing an alignment-free measure of genomic similarity.

Following the nearest-neighbor evaluation protocol introduced by Sun~{et~al.}~\cite{sun2021geometric}, a test sequence is considered correctly classified if its nearest neighbor (under the above feature-space distance) belongs to the same viral family. Using $k=4$ and unweighted distances, the proposed persistent edge ideal representation achieved the following performance:

\begin{table}[h!]
	\centering
	\begin{tabular}{l c c c c c}
		\hline
		& \textbf{Accuracy} & $\mathbf{F_1}$ & \textbf{Balanced accuracy} & \textbf{Recall} & \textbf{Precision} \\
		\hline
		\textbf{Score} & 0.8385 & 0.8378 & 0.8401 & 0.8401 & 0.8415 \\
		\hline
	\end{tabular}
	\caption{Classification performance summary.}
	\label{tab:performance-summary}
\end{table}

These results demonstrate that persistent edge ideal features capture informative structural patterns within viral genomes.

\subsection{Molecular Representation}

\begin{figure}[!t]
	\centering
	\begin{subfigure}[t]{0.20\textwidth}
		\centering
		\includegraphics[width=\textwidth]{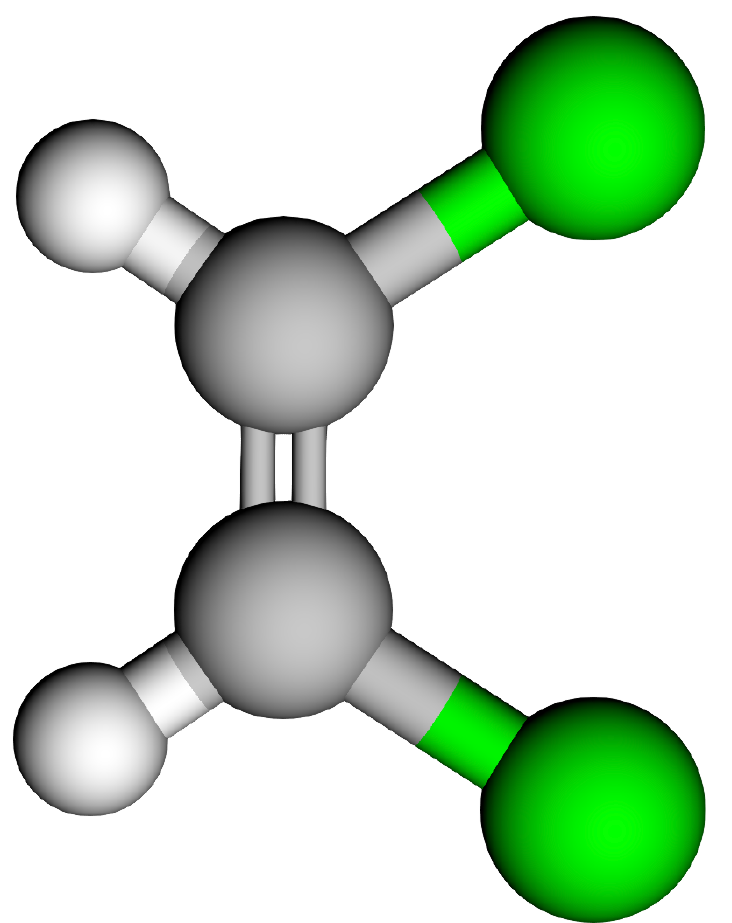}
		\caption{}
		\label{fig:cisisomer}
	\end{subfigure}
	\hspace{3cm}
	\begin{subfigure}[t]{0.24\textwidth}
		\centering
		\includegraphics[width=\textwidth]{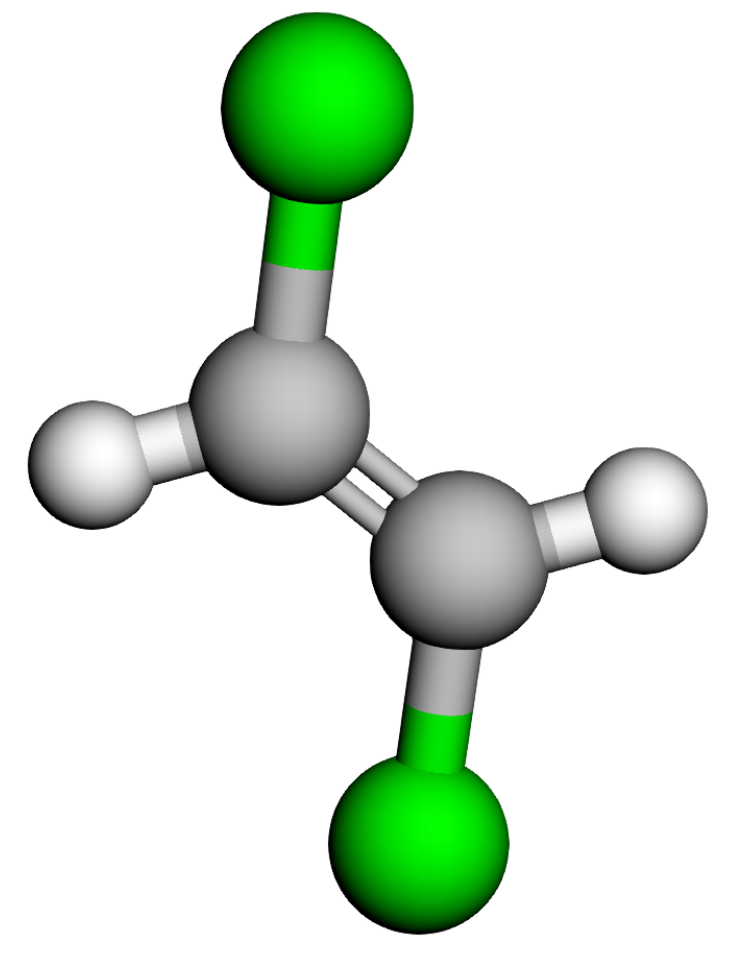}
		\caption{}
		\label{fig:transisomer}
	\end{subfigure}
	\caption{Visualization of (a) the cis- isomer \cite{PubChem_CID643833} and (b) the trans- isomer \cite{PubChem_CID638186}.}
	\label{fig:isomers}
\end{figure}

Recent advances in artificial intelligence have driven the development of algebraic and topological frameworks capable of encoding molecular or network identity through connectivity and atomic arrangement. Graph-based representations naturally capture these structural relationships, with vertices denoting atoms and edges representing chemical bonds. Methods inspired by {topological data analysis} and {persistent homology} have therefore been widely proposed in molecular and biological current research directions~\cite{xia2019persistent, chen2023path, anand2022topological}.

From an algebraic standpoint, molecular graphs can be characterized by their {edge ideals}, which translate adjacency information into monomial generators. These ideals form algebraic descriptors that encode the combinatorial and geometric structure of the underlying graph. In this work, we propose the use of {persistent edge ideals} by incorporating a filtration parameter that controls the inclusion of edges based on a physical or geometric criterion, such as interatomic distance.

Molecular function is fundamentally linked to the three-dimensional geometry of atoms within a molecule. Even when two compounds share the same molecular formula, distinct spatial arrangements of their atoms can lead to differences in polarity, reactivity, and stability. This phenomenon, known as {isomerism}, arises from variations either in bonding connectivity or in the relative spatial positioning of atoms. While conventional analyses distinguish isomers using experimental measurements of physical or spectroscopic properties, modern computational methods allow such distinctions to be characterized directly from geometric and topological information~\cite{SuwayyidWei2024_PersistentMayerDirac, wee2023persistent}.

To demonstrate this capability, we apply the {persistent edge ideal} framework to two geometric isomers of dichloroethene, {cis-} and {trans-}\(\mathrm{C_2H_2Cl_2}\) depicted in Figure \ref{fig:isomers}. Although both share identical connectivity, the orientation of the chlorine atoms relative to the carbon–carbon double bond differs: in the {cis}-form, the chlorine atoms lie on the same side of the molecular plane, whereas in the {trans}-form, they occupy opposite sides. Using the atomic coordinates of each isomer, a Vietoris–Rips filtration is constructed over the range \([0,5]\,\text{\AA}\), and the graded Betti numbers of the resulting edge ideals are computed across dimensions.

The filtration reveals distinct topological behavior between the two forms. The {cis}-isomer, being more compact, exhibits earlier activations of edge-related Betti generators, while the more elongated {trans}-isomer produces delayed activated edges demonstrated in the graded Betti numbers \(\beta_{i,i+2}\). Moreover, due to the compactness of the cis-isomer, some combinatorial features live shorter than those in the trans-isomer revealed in the graded Betti numbers \(\beta_{i,i+3}\). These differences indicate that persistent edge ideals are sensitive to subtle geometric variations, providing a purely algebraic–topological signature capable of distinguishing between molecular isomers. Figures~\ref{fig:cis}--\ref{fig:trans} display the corresponding atomic geometries and persistence diagrams. Clearly, these isomers have very different characteristics of persistent graded Betti curves.

\begin{figure}[!t]
	\centering
	\begin{subfigure}[t]{0.48\textwidth}
		\centering
		\includegraphics[width=\textwidth]{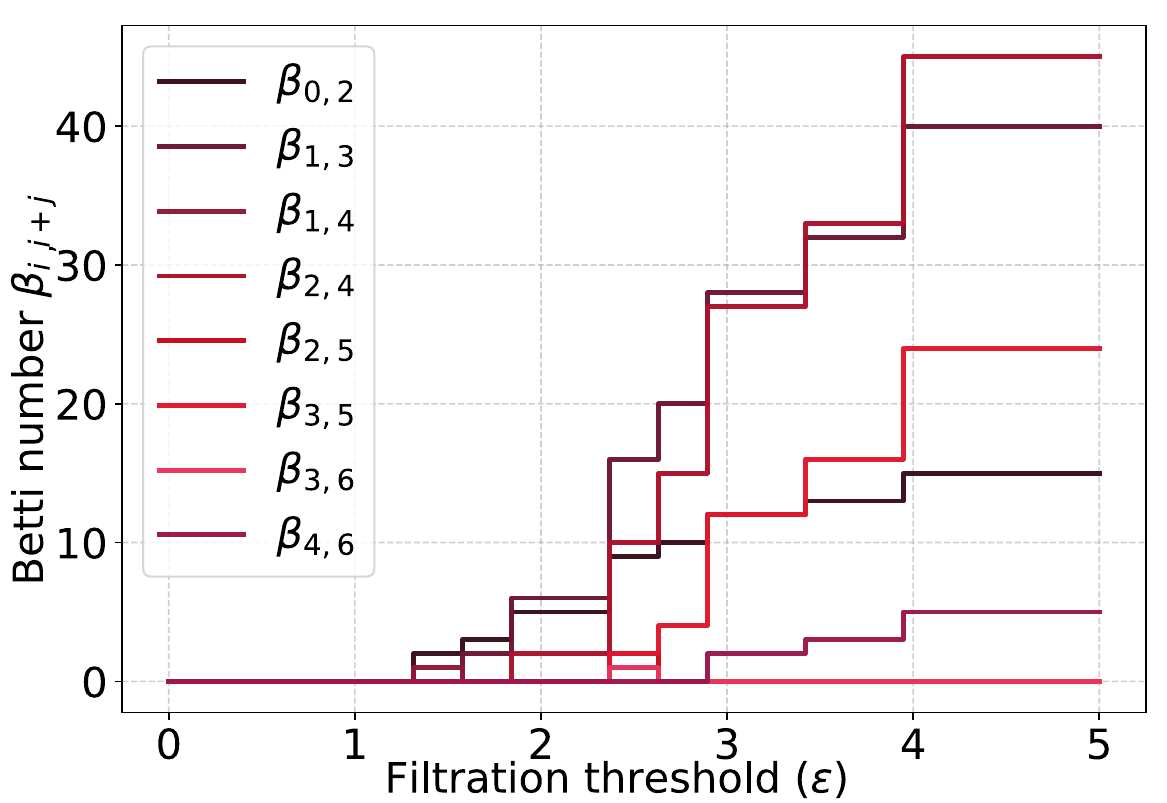}
		\caption{}
		\label{fig:cis}
	\end{subfigure}
	\begin{subfigure}[t]{0.48\textwidth}
		\centering
		\includegraphics[width=\textwidth]{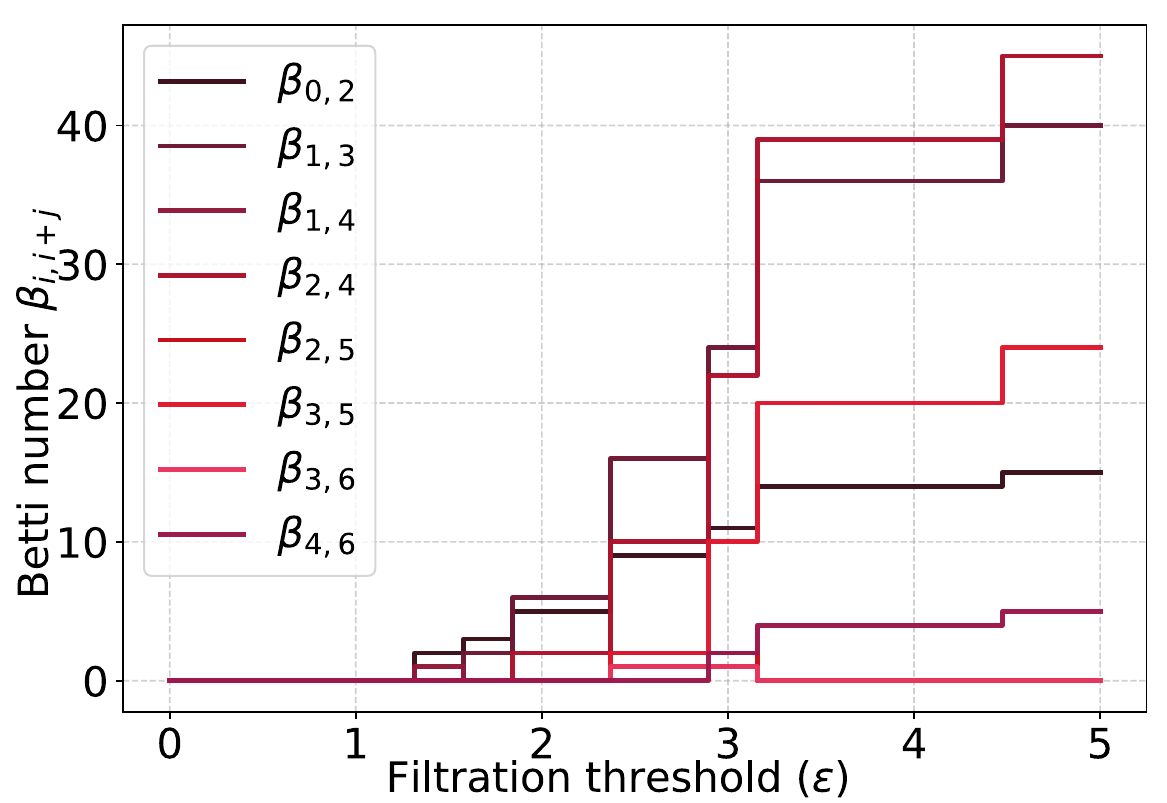}
		\caption{}
		\label{fig:trans}
	\end{subfigure}
	\caption{Comparison between (a) the cis- isomer and (b) the trans- isomer graded persistent Betti curves of edge ideals of the dichloroethene molecule derived from the Vietoris--Rips edge ideal filtration.}
	\label{fig:betti-comparison}
\end{figure}

\section{Discussion}

The present framework extends the ideas introduced in the Persistent Stanley--Reisner Theory (PSRT)\cite{SuwayyidWei2026_PSRT}, but with notable conceptual and technical differences.  
In PSRT, persistence was defined through the {change of the combinatorial decomposition}—for instance, through the evolution of facets or Stanley--Reisner generators—and the persistence module was constructed directly on those decompositions, with a subsequent analysis of stability.  
In contrast, the current approach begins from the functorial and homological viewpoint: persistence is defined intrinsically through the $\operatorname{Tor}$ functor, with structure maps induced naturally by inclusions in the filtration.  Stability and naturality thus arise as direct consequences of the functoriality of $\operatorname{Tor}$ and the compatibility of Hochster’s correspondence with these maps.

Furthermore, while PSRT formulated the persistent Hochster formula by identifying the corresponding persistent (co)homology groups, here we start with the $\operatorname{Tor}$-based definition of persistent Betti numbers and then derive the Hochster description as a corollary.  Hence, the Hochster formula appears not as a defining statement but as a natural compatibility result between algebraic and topological realizations.  
Another distinction lies in the treatment of classical relations such as the Betti shift and Betti splitting.
The results presented here are not restricted to graphs or simplicial complexes but remain valid for any filtration of monomial ideals, including those arising from hypergraphs, hyperedge ideals, and facet ideals.  In this sense, the framework applies to a broad class of combinatorial and algebraic objects.

In parallel to our approach, Hu et al. presented {persistent ideals} as the associated primes of Stanley–Reisner ideals or of edge ideals taken along a filtration and used their births and deaths as commutative-algebraic barcodes~\cite{Hu2025CommAlgTDA}. In the present work, we concentrate on the {minimal} associated primes, which correspond directly to minimal vertex covers (for edge ideals) and thus more transparently reflect and allow finer control over the combinatorial structure of the underlying monomial ideals.

\section{Conclusion}
This work develops a functorial and homological framework for persistence in commutative algebra, extending the combinatorial and simplicial complex-based approach of the Persistent Stanley--Reisner Theory (PSRT) to a derived-functor setting. By defining persistent graded Betti numbers through the $\operatorname{Tor}$ functor, we established a functorial notion of persistence that naturally inherits exactness, naturality, and duality properties from homological algebra. This perspective unifies several classical identities---Hochster’s formula, the Betti shift, and Betti splitting---into a single persistent framework that holds across filtrations of graded monomial ideals.

The persistent Hochster formula derived herein provides an explicit bridge between algebraic persistence and topological persistence. It identifies the persistent graded Betti numbers of a filtration of Stanley--Reisner rings with the ranks of induced maps in the cohomology of induced subcomplexes. Consequently, this establishes a direct correspondence between algebraic and topological persistence modules, recovering classical persistent homology as a special case. When applied to graphs, hypergraphs,  and their edge ideals, this formalism yields a persistent Betti theory that tracks the evolution of syzygies, minimal generators, and higher-order algebraic relations under edge or vertex filtration. 

In addition to the functorial Betti–based viewpoint, we introduced a prime–level summary for edge–ideal filtration by tracking {persistent minimal primes}. Recording the births and deaths of these primes across $t$ yields the collections $\mathcal{P}^{t,t'}$ and their cardinalities $\Pi^{t,t'}$ which function as an algebraic barcode complementary to persistent graded Betti data. These prime barcodes provide interpretable indicators of structural change—identifying exactly which vertex covers remain minimal as edges are added—while persistent Betti numbers quantify the evolution of generators and syzygies. Together, these invariants deliver a multi–resolution description of filtered combinatorial structure that is both computationally tractable and directly tied to graph–theoretic semantics, enhancing downstream tasks.

Beyond theoretical contributions, our framework is directly applicable in several data-driven settings. We use persistent edge ideals to build $k$-mer interaction graphs along genomes and track graded Betti numbers across radii, yielding fixed-length {persistent graded Betti vectors} on a curated mitochondrial benchmark~\cite{MillanArias2022DeLUCS}. This representation supports alignment-free classification under a nearest-neighbor protocol, capturing family-level structure in an interpretable algebraic form. For molecular graphs with 3D coordinates, filtrations by interatomic distance produce persistent edge ideals whose graded Betti profiles discriminate geometric isomers (e.g., cis/trans dichloroethene); the cis configuration exhibits earlier but shorter-lived linear and near-linear strand features, while the trans configuration yields delayed yet longer-lived ones, reflecting compact versus elongated geometries in a purely algebraic signal. These example applications demonstrate the potential of the proposed method in data science and machine learning.

\section*{Acknowledgments}
The work of Suwayyid was supported by the King Fahd University of Petroleum and Minerals. The work of Wei was supported by NIH grant  R35GM148196 and    MSU research Foundation.

\bibliographystyle{unsrt}
\bibliography{references}
\end{document}